\numberwithin{equation}{section}
\newtheorem{Theorem}{Theorem}[section]
\newtheorem*{Theorem*}{Theorem}
\newtheorem{Corollary}[Theorem]{Corollary}
\newtheorem{Proposition}[Theorem]{Proposition}
 { \theoremstyle{definition}
\newtheorem{Definition}[Theorem]{Definition}

\newtheorem{Example}[Theorem]{Example}
\newtheorem{Remark}[Theorem]{Remark} }
\newcommand{\R}{\mathbb{R}}
\newcommand{\be}{\begin{equation}}
\newcommand{\ee}{\end{equation}}
\newcommand{\beq}{\begin{eqnarray}}
\newcommand{\eeq}{\end{eqnarray}}
\newcommand{\w}{\omega}
\newcommand{\s}{\epsilon}
\begin{document}

\title{On Third-Order Evolution Systems Describing Pseudo-Spherical or Spherical Surfaces}

\author{Filipe Kelmer$^{1}$}
\date{}
\maketitle
\footnotetext[1]{Departamento de Geometría y Topología, Universidad de Almer\'\i a, Spain,  e-mail: kelmer.a.f@gmail.com. Partially supported by IMAG Grants Program, modality
\textit{Support for visits of young talented researchers} (2023 call) - Institute of Mathematics of the University of Granada (IMAG).}

\begin{abstract}
We consider a class of third-order evolution equations of the form
\begin{equation*}
\left\{
 \begin{array}{l}
 \displaystyle u_{t}=F\left(x,t,u,u_x,u_{xx},u_{xxx},v,v_x,v_{xx},v_{xxx}\right),\\
 \displaystyle v_{t}=G\left(x,t,u,u_x,u_{xx},u_{xxx},v,v_x,v_{xx},v_{xxx}\right),
 \end{array}
 \right. 
\end{equation*}
describing pseudos-pherical (\textbf{pss}) or spherical surfaces (\textbf{ss}), meaning that, their generic solutions $(u(x,t), v(x,t))$ provide metrics, with coordinates $(x,t)$, on open subsets of the plane, with constant curvature $K=-1$ or $K=1$. These systems can be described as the integrability conditions of $\mathfrak{g}$-valued linear problems, with $\mathfrak{g}=\mathfrak{sl}(2,\R)$ or $\mathfrak{g}=\mathfrak{su}(2)$, when $K=-1$, $K=1$, respectively. We obtain characterization and also classification results. Applications of these results provide new examples and new families of such systems, which also contain systems of coupled KdV and mKdV-type equations and nonlinear Schr\"odinger equations. Additionally, this theory is applied to derive a B\"acklund transformation for the coupled KdV system.
\end{abstract}
\noindent 2010 \foreignlanguage{english}{\textit{Mathematics Subject
Classification}: 35G50, 53B20, 58J60, 35Q53,35Q55}
%35G50  	Systems of nonlinear higher-order PDEs
%58J60: Relations of PDEs with special manifold structures (Riemannian, Finsler, etc.)
%53B20: Local Riemannian geometry
%35Q53  	KdV equations (Korteweg-de Vries equations)
%35Q55  	NLS equations (nonlinear Schrödinger equations)

\selectlanguage{english}%
\textit{Keywords:} systems of evolution equations, pseudo-spherical surfaces, spherical surfaces, B\"acklund transformation, Korteweg-de Vries equation, nonlinear Schr\"odinger equation.

%\selectlanguage{british}%

\section{Introduction}

We consider systems of evolution equations describing pseudo-spherical surfaces (\textbf{pss}) or spherical surfaces (\textbf{ss}) of type
\begin{equation}\label{eq:S}
\left\{
 \begin{array}{l}
 \displaystyle u_{t}=F\left(x,t,u,u_x,u_{xx},u_{xxx},v,v_x,v_{xx},v_{xxx}\right),\\
 \displaystyle v_{t}=G\left(x,t,u,u_x,u_{xx},u_{xxx},v,v_x,v_{xx},v_{xxx}\right),
 \end{array}
 \right. 
\end{equation}
for $F$ and $G$ smooth functions. This includes systems of coupled KdV and mKdV-type equations \eqref{eq:coupledKdVEx}, \eqref{eq:mKdV-}, and \eqref{eq:mKdV+}, as well as third-order nonlinear Schr\"odinger equations \eqref{eq:3NLS+} and \eqref{eq:3NLS-}.

Systems of partial differential equations describing \textbf{pss} or \textbf{ss} are characterized by the fact that their generic solutions provide metrics on non-empty open subsets of $\mathbb{R}^{2}$, with Gaussian curvature $K=-1$ or $K=1$, respectively. This concept was first introduced, in 2002, by Q. Ding e K. Tenenblat in \cite{ding} as a generalization of the notion of differential equations describing \textbf{pss} given in 1986 by S. S. Chern and K. Tenenblat in \cite{chern}.

The definition given by Chern and Tenenblat was inspired by Sasaki's observation in 1979 \cite{sasaki}, that a class of nonlinear differential equations, such as KdV, mKdV and SG which can be solved by the AKNS $2\times 2$ inverse scattering method \cite{akns}, was related to \textbf{pss}. Today, it is known that the class of differential equations describing \textbf{pss} is, in fact, larger than the AKNS class. Examples include \cite{chern} Ex. 2.7 a),b), \cite{beals1989} equation (1.2) when $\alpha\neq 0$, \cite{Jorge1987} equations (3.11),(3.13).
 
Besides introducing the notion of differential equations describing pseudo-spherical surfaces (pss), Chern and Tenenblat developed a systematic procedure for characterizing and classifying such equations. This procedure has been applied to various classes of partial differential equations, see 
\cite{castrosilva2015, Catalano2020, Catalano2016, Catalano2014, chern, gomesneto2010,  Jorge1987, kamran1995,reyes1998, Rabelo1989, Rabelo1990, Rabelo1992, reyes1998, Catalano2024}.

Equations describing \textbf{pss} (resp. \textbf{ss}) can be seen as a compatibility condition of an associated $\mathfrak{sl}(2, \mathbb{R})$-valued (resp. $\mathfrak{su}(2)$-valued) linear problem, also referred to as a zero curvature representation. As a consequence, these equations may admit B{\"a}cklund transformations, as demonstrated in Section \ref{sec:BT} for a coupled KdV-type system (see also \cite{chern,beals1989,Reyes-backlund} for other examples). This characterization also suggests that these equations might exhibit additional properties, such as non-local symmetries \cite{Rey7, Rey8}, an infinite number of conservation laws \cite{Cavalcante1988}, and they are natural candidates for being solved by the Inverse Scattering Method \cite{akns,beals1989}.

Another remarkable property of partial differential equations describing \textbf{pss} (resp. \textbf{ss}) is the theoretical existence of local transformations between generic solutions of those equations. This is due to a basic geometric fact that given two points of two Riemannian manifolds, with the same dimension and same constant sectional curvature, there is always an isometry between neighborhoods of those points. Kamran and Tenenblat, in \cite{kamran1995}, explored this property and demonstrated a local existence theorem, assuring that, given any two equations describing \textbf{pss}, then, under a technical assumption, there exists a local smooth application that maps generic solutions of one equation into solutions of the other. Some results in this direction were considered later in \cite{Reyes-backlund}. 

The properties of equations that describe \textbf{pss} or \textbf{ss} are intrinsic geometric properties of a Riemannian metric. As a result, the B{\"a}cklund transformations derived from this geometric framework, such as the transformation for a coupled KdV-type system presented in Theorem \ref{th:BT}, are not initially associated with surfaces of negative Gaussian curvature in $\mathbb{R}^3$ (see also \cite{CamposTenenblat,Campos,BealsTenenblat1991} for the case of pseudo-Riemannian manifolds with constant sectional curvature). This contrasts with the extrinsic geometrical approach by A. V. B{\"a}cklund \cite{Backlund1985,Backlund1905}, which was later extended to surfaces with non-zero constant Gaussian curvature in pseudo-Euclidean space \cite{McNertney1980,Palmer1990,Gu-Hu-Inoguchi2002,KelmerRodrigues2022} and to higher-dimensional submanifolds of constant sectional curvature \cite{TenenblatTerng,Terng,Tenenblat1985,BarbosaFerreiraTenenblat,ChenZuoCheng2004,DajczerTojeiro}. Regardless of the intrinsic or extrinsic approach, the analytical interpretation of B{\"a}cklund transformations, in terms of partial differential equations, enables the generation of new solutions by solving a first-order system (see, for instance, the coupled KdV system \eqref{eq:KdVBT} and the first-order system \eqref{eq:BT}). Furthermore, if a superposition formula is established, it allows for the algebraic construction of infinitely many new solutions \cite{bianchi1,KelmerTenenblat2024}.

In 2002, Ding and Tenenblat \cite{ding}, besides introducing the notion of systems of partial equations describing \textbf{pss} and \textbf{ss}, they presented characterization results for systems of evolution equations of type
 \beq\nonumber
 \left\{
 \begin{array}{l}
 u_t=F(u,u_x,v,v_x),\\
 v_t=G(u,u_x,v,v_x),
 \end{array}
 \right. 
 \eeq
for $F$ e $G$ smooth functions. In particular, they considered important equations such as nonlinear Schr{\"o}dinger equation, Heisenberg ferromagnetic model and Landau-Lifschitz equations. They also presented a classification result for systems of evolution equations of type
 \beq\nonumber
\left\{
 \begin{array}{l}
 u_t=-v_{xx}+H_{11}(u,v)u_x+H_{12}(u,v)v_x+H_{13}(u,v),\\
 v_t=u_{xx}+H_{21}(u,v)u_x+H_{22}(u,v)v_x+H_{23}(u,v),
 \end{array}
 \right. 
 \eeq
where, $H_{ij}$ are smooth functions.

In 2022, Kelmer and Tenenblat \cite{kelmer2022} obtain characterization and classification results for systems of hyperbolic equations of type
 \begin{equation*}
\left\{
 \begin{array}{l}
 \displaystyle u_{xt}=F\left(u,u_x,v,v_x\right),\\
 \displaystyle v_{xt}=G\left(u,u_x,v,v_x\right),
 \end{array}
 \right. 
\end{equation*}
where $F$ and $G$ are smooth functions. By imposing certain conditions, they obtain some classification results providing new examples and new families of systems of differential equations, which contain generalizations of a Pohlmeyer-Lund-Regge type system and the Konno-Oono coupled dispersionless system.

This paper is organized as follows. In Section \ref{sec:Preliminaries},
we gather some preliminaries on systems of differential equations that describe \textbf{pss} or \textbf{ss}, and provide the linear problem associated with these systems, which can be used to obtain B\"{a}cklund transformation. In Section \ref{section-mainresults}, wwe begin by presenting explicit examples, including systems of coupled KdV and mKdV-type equations and nonlinear Schr\"odinger equations. We then state our main results: a characterization result (Theorem \ref{Lemma}), classification results (Theorems \ref{th:f21=l}, \ref{th:f31=l}, \ref{thm:L-f21} and \ref{thm:L-f31}) and, and, as an application of the classification results, we present corollaries that provide additional examples and introduce new families of systems of differential equations describing \textbf{pss} and \textbf{ss}. Section \ref{sec:proofs} is dedicated to proving Theorems \ref{Lemma}, \ref{th:f21=l}, \ref{th:f31=l}, \ref{thm:L-f21} and \ref{thm:L-f31}. Finally, in Section \ref{sec:BT}, we use the associated linear problem to provide a B{\"a}cklund transformation for the coupled KdV system \eqref{eq:coupledKdV} illustrated with examples.

\section{\label{sec:Preliminaries}Preliminaries}
If $\left(M,\, \mathbf{g}\right)$ is a 2-dimensional Riemannian manifold and $\left\{ \omega_{1},\omega_{2}\right\} $ is a co-frame, dual to an orthonormal frame $\left\{ e_{1},e_{2}\right\} $, then the metric is given by  $\mathbf{g}=\omega_{1}^{2}+\omega_{2}^{2}$ and $\omega_{i}$ satisfy the structure equations: $d\omega_{1}=\omega_{3}\wedge\omega_{2}$ and $d\omega_{2}=\omega_{1}\wedge\omega_{3}$, where $\omega_{3}$ denotes the connection form defined as $\omega_{3}(e_{i})=d\omega_{i}(e_{1},e_{2})$. The Gaussian curvature of $M$ is the function $K$ such that $d\omega_{3}=-K\omega_{1}\wedge\omega_{2}$.

\begin{Definition}
A system of partial differential equations $\mathcal{S}$, for scalar functions $u\left(x,t\right)$ and $v\left(x,t\right)$ \emph{describes
pseudospherical surfaces} \textbf{(pss)}\emph{, or spherical surfaces
}\textbf{(ss)} if it is equivalent to the structure equations (see \cite{keti})
of a surface with Gaussian curvature $K=-\delta$, with $\delta=1$
or $\delta=-1$, respectively, i.e., 
\begin{equation}\label{eq:SE}
\begin{array}{l}
d\omega_{1}=\omega_{3}\wedge\omega_{2},\quad d\omega_{2}=\omega_{1}\wedge\omega_{3},\quad d\omega_{3}=\delta\omega_{1}\wedge\omega_{2},\end{array}
\end{equation}
where $\left\{ \omega_{1},\omega_{2},\omega_{3}\right\} $ are $1$-forms
\begin{equation}
\begin{array}{l}
\omega_{1}=f_{11}dx+f_{12}dt,\quad\omega_{2}=f_{21}dx+f_{22}dt,\quad\omega_{3}=f_{31}dx+f_{32}dt,\end{array}\label{eq:forms}
\end{equation}

\noindent such that $\omega_{1}\wedge\omega_{2}\neq0$, i.e., 
\begin{equation}
f_{11}f_{22}-f_{12}f_{21}\neq0,\label{eq:nondeg_cond}
\end{equation}
and $f_{ij}$ are functions of $x$, $t$, $u(x,t)$, $v(x,t)$ and it's derivatives with respect to $x$ and $t$.
\end{Definition}

Locally, considering the basis $\left\{ dx,dt\right\} $, the structure equations (\ref{eq:SE}) are equivalent to the following system of equations
\beq\label{eq:SELocal}
\left\{
\begin{array}{l}
-f_{11,t}+f_{12,x}=f_{31}f_{22}-f_{32}f_{21},\\
-f_{21,t}+f_{22,x}=f_{11}f_{32}-f_{12}f_{31},\\
-f_{31,t}+f_{32,x}=\delta(f_{11}f_{22}-f_{12}f_{21}),
\end{array}
\right.
\eeq
and the metric is
\be\nonumber
ds^2=(f_{11}^2+f_{21}^2)dx^2+2(f_{11}f_{12}+f_{21}f_{22})dxdt+(f_{12}^2+f_{22}^2)dt^2.
\ee
Notice that, according to the definition, given a solution $u,v$ of a system $\mathcal{S}$ describing \textbf{pss} (or \textbf{ss}) with associated 1-forms $\omega_{1},\omega_{2}$ and $\omega_{3}$, we consider an open connected set $U\subset\mathbb{R}^{2}$, contained in the domain of $u,v$, where $\omega_{1}\wedge\omega_{2}$ is everywhere nonzero on $U$. Such an open set $U$ exists for generic solutions $u,v$. Then, $\mathbf{g}=\omega_{1}^{2}+\omega_{2}^{2}$ defines a Riemannian metric, on $U$, with Gaussian curvature $K=-\delta$. It is in this sense that one can say that a system describes, \textbf{pss} (resp. \textbf{ss}). This is an intrinsic geometric property of a  Riemannian metric (not immersed in an ambient space).
For example, the nonlinear Schr{\"o}dinger equation,
\begin{equation}\label{eq:NLSE-}
\left\{
\begin{array}{l}
u_t+v_{xx}-2(u^2+v^2)v=0,\\
-v_t+u_{xx}-2(u^2+v^2)u=0,
\end{array}
\right. 
\end{equation}
describes \textbf{pss} with associated functions
\begin{equation}\label{eq:NLSE-fij}
\begin{array}{lll}
f_{11}=2u,& f_{21}=-2v,& f_{31}=2\eta\\
f_{12}=-4\eta u-2v_{x}, & f_{22}=4\eta v-2u_{x}, & f_{32}=-4\eta^2-2(u^2+v^2),
\end{array}
\end{equation}
where $\eta$ is a parameter. Indeed, for the functions $f_{ij}$ above, the structure equations (\ref{eq:SELocal}), with $\delta=1$, is satisfied modulo (\ref{eq:NLSE-}). In this example, for every generic solution $u(x,t),v(x,t)$, of (\ref{eq:NLSE-}), such that condition (\ref{eq:nondeg_cond})
holds, i.e.,  $-2(u^2+v^2)_x\neq 0$, there exists a Riemannian metric $\mathbf {g}$, with constant Gaussian curvature $K=-1$, whose coefficients $g_{ij}$ are  given by 
\begin{equation*}
\begin{array}{l}
g_{11}=4(u^2+v^2),\\
g_{12}=g_{21}=-16\eta(u^2+v^2)-8(uv_x-vu_x),\\
g_{22}=16\eta^2(u^2+v^2)+4(u_x^2+v_x^2)+16\eta(uv_x-vu_x).
\end{array}
\end{equation*}
In the former example, the parameter $\eta$ implies the existence of a one-parameter family of metrics associated with every generic solution. 

% \begin{Definition}\label{defi:etapss}
% We say that a system of real partial differential equations $\mathcal{S}$, describes \textbf{pss} (respectively \textbf{ss}) if it describes \textbf{pss} (respectively \textbf{ss}) for a one parameter family, $\eta\in\mathbb{K}$, where $\mathbb{K}\in\left\lbrace \mathbb{R},\mathbb{C}\right\rbrace$, of $1$-forms $\omega_i$, $i=1,2,3$.
% \end{Definition}

The coupled KdV type system \cite{Hirota1981}, is a third-order evolution system given by
 \begin{equation}\label{eq:coupledKdV}
     \left\{\begin{array}{rl}
       u_t=   &-u_{xxx}+6 uvu_x, \\
       v_t=   & -v_{xxx}+6uvv_x,
     \end{array}
     \right.
 \end{equation}
that describes \textbf{pss} with associated functions
 \begin{equation}\label{eq:coupledKdVfij}
 \begin{array}{ll}
 f_{11}=u+v,&\qquad f_{12}=-u_{xx}-v_{xx}+2(vu^2+uv^2)-\eta^2(u+v)+\eta(v_x-u_x),\\
 f_{21}=\eta,&\qquad f_{22}=-\eta^3+2(vu_x-uv_x)+2\eta uv,\\
 f_{31}=v-u ,& \qquad f_{32}=u_{xx}-v_{xx}+2(uv^2-vu^2)+\eta^2(u-v)+\eta(u_x+v_x),
 \end{array}
 \end{equation}
 where $\eta$ is a parameter.

System of differential equation describing \textbf{pss}, or \textbf{ss}, can be seen as the integrability condition
\be \label{eq:CondIntegrabilidade}
 d\Omega-\Omega\wedge \Omega=0,
\ee
of the linear system \cite{chern},
 \be\label{eq:ProblemaLinear}
 d\Psi=\Omega \Psi,
 \ee
where $\Psi=\left(\Psi_1\\ \Psi_2\right)^T$, with $\Psi_i(x,t)$, $i=1,2,$ and $\Omega$ is either the $\mathfrak{sl}\left(2,\mathbb{R}\right)$-valued
1-form
\[
\Omega=\frac{1}{2}\left(\begin{array}{cc}
\omega_{2} & \omega_{1}-\omega_{3}\\
\omega_{1}+\omega_{3} & -\omega_{2}
\end{array}\right),\qquad\text{when \;}\delta=1,
\]
or the $\mathfrak{su}\left(2\right)$-valued 1-form 
\[
\Omega=\frac{1}{2}\left(\begin{array}{cc}
i\omega_{2} & \omega_{1}+i\omega_{3}\\
-\omega_{1}+i\omega_{3} & -i\omega_{2}
\end{array}\right),\qquad\text{when \;}\delta=-1.
\]
This characterization is due to the fact the structure equation (\ref{eq:SE}) is equivalent to (\ref{eq:CondIntegrabilidade}).

Locally, with $\Omega=A dx+B dt$, the linear problem (\ref{eq:ProblemaLinear}), is given by
\be\label{eq:ProblemaLinearlocal}
\Psi_x=A\Psi,\qquad\Psi_t=B \Psi,
\ee
where
\be\label{eq:ABlocal}
A=\frac{1}{2}\left(\begin{array}{cc}
f_{21}&f_{11}-f_{31}\\
f_{11}+f_{31}&-f_{21}
\end{array}\right),
\quad
B=\frac{1}{2}
\left(\begin{array}{cc}
f_{22}&f_{12}-f_{32}\\
f_{12}+f_{32}&-f_{22}
\end{array}\right),
\ee
when $\delta =1$ or,
\be\label{eq:ABsu2}
A=\frac{1}{2}\left(\begin{array}{cc}
if_{21}&f_{11}+if_{31}\\
-f_{11}+if_{31}&-if_{21}
\end{array}\right),
\quad
B=\frac{1}{2}
\left(\begin{array}{cc}
if_{22}&f_{12}+if_{32}\\
-f_{12}+if_{32}&-if_{22}
\end{array}\right),
\ee
when $\delta=-1$. Moreover, the integrability condition, $\displaystyle \Psi_{xt}=\Psi_{tx}$, is given by
\be\label{eq:compatibilidade2x2}
 A_t-B_x+AB-BA=0,
\ee
which is also known as $\mathfrak{sl}\left(2,\mathbb{R}\right)$ (or $\mathfrak{su}\left(2\right)$) \emph{zero-curvature representation}.

By introducing a new function, known as  \textit{pseudopotential} \cite{Nucci1987}, defined by
\begin{eqnarray}\nonumber
    \Gamma=\frac{\Psi_2}{\Psi_1},
\end{eqnarray}
the linear problem (\ref{eq:ProblemaLinearlocal}) implies a Riccati type system,
\begin{equation}\label{eq:Gamma1}
    \left\lbrace\begin{array}{c}
         \Gamma_{x}=\frac{1}{2}\left(f_{11}+f_{31}\right)-f_{21}\Gamma +\frac{1}{2}\left(-f_{11}+f_{31}\right)\Gamma^2, \\
         \Gamma_{t}=\frac{1}{2}\left(f_{12}+f_{32}\right)-f_{22}\Gamma +\frac{1}{2}\left(-f_{12}+f_{32}\right)\Gamma^2.
    \end{array}
    \right.
\end{equation}
We find that the integrability condition $\Gamma_{xt}=\Gamma_{tx}$ of the Riccati system \eqref{eq:Gamma1} is equivalent to the structure equations (\ref{eq:SELocal}).

For instance, since the coupled KdV system (\ref{eq:coupledKdV}) describes \textbf{pss} it is equivalent to the integrability condition (\ref{eq:compatibilidade2x2}) of the linear problem (\ref{eq:ProblemaLinearlocal}), with $A$ and $B$ defined by (\ref{eq:ABlocal}) where the associates functions $f_{ij}$ is given by (\ref{eq:coupledKdVfij}). That is, for the linear problem
\begin{eqnarray}\nonumber
\Psi_x&=&\left(\begin{array}{cc}
\frac{\eta}{2} & u\\
v & -\frac{\eta}{2}
\end{array}\right)\Psi,\\\nonumber
\Psi_t&=&
\left(\begin{array}{cc}
-\frac{\eta^3}{2}+(\eta v-v_x)u+vu_x&-\eta^2u-\eta u_x-u_{xx}+2vu^2\\
-\eta^2v+\eta v_x-v_{xx}+2uv^2&\frac{\eta^3}{2}-(\eta v-v_x)u-vu_x
\end{array}\right)
\Psi,
\end{eqnarray}
the integrability condition, $\Psi_{xt}=\Psi_{tx}$, is satisfied if, and only if, $u$ and $v$ is a solution of the coupled KdV type system (\ref{eq:coupledKdV}).

The presence of the parameter $\eta$ in the linear problem suggests that the system is a natural candidate for being solved by the Inverse Scattering Method \cite{akns,beals1989}. Additionally, this system may exhibit other noteworthy properties such as non-local symmetries \cite{Rey7, Rey8}, an infinite number of conservation laws \cite{Cavalcante1988} and B\"{a}cklund transformations \cite{chern,beals1989,Reyes-backlund}. In particular, we provide a detailed example of a B\"{a}cklund transformation for the coupled KdV system in Section \ref{sec:BT} (see Theorem \ref{th:BT}).

\section{Some examples and main results \label{section-mainresults}}
In this section, we first illustrate the class of systems of partial differential equations studied in this paper, with some examples that describes \textbf{pss} or \textbf{ss}. Then, we present the characterization and classification results. To maintain focus on the main results and for the convenience of readers primarily interested in applying the results, we will postpone the proofs to Section \ref{sec:proofs}.
\subsection{\label{sec:Examples}Examples}
In this subsection, we present examples of third-order systems of evolution equations that describe \textbf{pss} or \textbf{ss}. These include a coupled KdV-type system \cite{Hirota1981} and a third-order nonlinear Schr\"odinger equation \cite{KodamaHasegawa1987}.
\begin{Example}\label{3NLS+}
    The nonlinear Schr\"odinger equation type \cite{KodamaHasegawa1987},
\begin{equation}
    i q_t+\alpha(t)(q_{xx}+|q|^2q)+i\beta(t)(q_{xxx}+3|q|^2q_x)=0,
\end{equation}
where $\alpha(t)$ and $\beta(t)$ are real smooth functions of $t$, or equivalently in real form ($q(x,t)=u(x,t)+iv(x,t)$),
\begin{equation}\label{eq:3NLS+}
    \left\{ \begin{array}{l}
         u_t=-\alpha v_{xx}-\alpha(u^2+v^2)v-\beta u_{xxx}-3\beta(u^2+v^2)u_x,  \\
         v_t=\alpha u_{xx}+\alpha(u^2+v^2)u-\beta v_{xxx}-3\beta(u^2+v^2)v_x,
    \end{array}
    \right.
\end{equation}
is a third-order evolution system describing \textbf{pss} with associated functions,
\begin{equation}\begin{array}{l}
f_{11}= u+v,\qquad     f_{21}=v-u,\qquad f_{31}=\eta, \\
f_{12}=-\beta(v_{xx}+u_{xx})-(\beta \eta +\alpha)(v_x-u_x)+(u+v)(\beta(\eta^2-u^2-v^2)+\eta\alpha) ,    \\
f_{22}=-\beta(v_{xx}-u_{xx})+(\beta \eta+\alpha)(u_x+v_x)+ (v-u)(\beta(\eta^2-v^2-u^2)+\eta\alpha),    \\
f_{32}=  -2\beta(vu_x-uv_x)+(\beta\eta+\alpha)(\eta^2-u^2-v^2),
\end{array}
\end{equation}
where $\eta$ is a parameter.
\end{Example}
\begin{Example}\label{3NLS-}
    The nonlinear Schr\"odinger equation type,
\begin{equation}
    i q_t+\alpha(t)(q_{xx}-|q|^2q)+i\beta(t)(q_{xxx}-3|q|^2q_x)=0,
\end{equation}
where $\alpha(t)$ and $\beta(t)$ are real smooth functions of $t$, or equivalently in real form ($q(x,t)=u(x,t)+iv(x,t)$),
\begin{equation}\label{eq:3NLS-}
    \left\{ \begin{array}{l}
         u_t=-\alpha v_{xx}+\alpha(u^2+v^2)v-\beta u_{xxx}+3\beta(u^2+v^2)u_x,  \\
         v_t=\alpha u_{xx}-\alpha(u^2+v^2)u-\beta v_{xxx}+3\beta(u^2+v^2)v_x,
    \end{array}
    \right.
\end{equation}
is a third-order evolution system describing \textbf{ss} with associated functions,
\begin{equation}\begin{array}{l}
f_{11}= u+v,\qquad     f_{21}=v-u,\qquad f_{31}=\eta, \\
f_{12}=-\beta(v_{xx}+u_{xx})-(\beta \eta +\alpha)(v_x-u_x)+(u+v)(\beta(\eta^2+u^2+v^2)+\eta\alpha) ,    \\
f_{22}=-\beta(v_{xx}-u_{xx})+(\beta \eta+\alpha)(u_x+v_x)+ (v-u)(\beta(\eta^2+v^2+u^2)+\eta\alpha),    \\
f_{32}=  2\beta(vu_x-uv_x)+(\beta\eta+\alpha)(\eta^2+u^2+v^2),
\end{array}
\end{equation}
where $\eta$ is a parameter.
\end{Example}

\begin{Example}\label{Ex:familyzy2KdV+}
The coupled mKdV type system,
    \begin{equation}\label{eq:mKdV+}
      \begin{array}{l}
           u_t=-u_{xxx}+ \alpha^2(u^2+v^2)u_{x},  \\
           v_t=-v_{xxx}+ \alpha^2(u^2+v^2)v_{x}, 
      \end{array}  
    \end{equation}
where $\alpha\neq 0$ is a real constant, describes \textbf{pss}  for associated functions,
\begin{align*}\nonumber
&f_{11}=\tfrac{\sqrt{6}}{3}\alpha u,\qquad f_{21}=\tfrac{\sqrt{6}}{3}\alpha v,\qquad f_{31}=\eta,\\
&f_{12}=-\tfrac{\sqrt{6}}{3}\alpha\left(u_{xx}+\eta v_{x}\right)+ \tfrac{\sqrt{6}}{9}\alpha u\left(\alpha^2u^2+\alpha^2v^2+3 \eta^2\right),\\
&f_{22}=-\tfrac{\sqrt{6}}{3}\alpha\left(v_{xx}-\eta u_{x}\right)+ \tfrac{\sqrt{6}}{9}\alpha v\left(\alpha^2u^2+\alpha^2v^2+3 \eta^2\right),\\
&f_{32}=\tfrac{2}{3}\alpha^2(vu_{x}-uv_{x})+\tfrac{\eta}{3} \left(\alpha^2u^2+\alpha^2v^2+3 \eta^2\right).
\end{align*}
\end{Example}
\begin{Example}\label{Ex:familyzy2KdV-}
The coupled mKdV type system,
    \begin{equation}\label{eq:mKdV-}
      \begin{array}{l}
           u_t=-u_{xxx}- \alpha^2(u^2+v^2)u_{x},  \\
           v_t=-v_{xxx}- \alpha^2(u^2+v^2)v_{x}, 
      \end{array}  
    \end{equation}
where $\alpha\neq 0$ is a real constant, describes \textbf{ss} for associated functions,
\begin{align*}\nonumber
&f_{11}=\tfrac{\sqrt{6}}{3}\alpha u,\qquad f_{21}=\tfrac{\sqrt{6}}{3}\alpha v,\qquad f_{31}=\eta,\\
&f_{12}=-\tfrac{\sqrt{6}}{3}\alpha\left(u_{xx}+\eta v_{x}\right)- \tfrac{\sqrt{6}}{9}\alpha u\left(\alpha^2u^2+\alpha^2v^2-3 \eta^2\right),\\
&f_{22}=-\tfrac{\sqrt{6}}{3}\alpha\left(v_{xx}-\eta u_{x}\right)- \tfrac{\sqrt{6}}{9}\alpha v\left(\alpha^2u^2+\alpha^2v^2-3 \eta^2\right),\\
&f_{32}=\tfrac{2}{3}\alpha^2(uv_{x}-vu_{x})-\tfrac{\eta}{3} \left(\alpha^2u^2+\alpha^2v^2-3 \eta^2\right).
\end{align*}
\end{Example}

\begin{Example}\label{ex:coupledKdV}
    The coupled KdV type system \cite{Hirota1981}, \begin{equation}\label{eq:coupledKdVEx}
     \left\{\begin{array}{l}
       u_t=   -u_{xxx}+6 uvu_x, \\
       v_t=    -v_{xxx}+6uvv_x,
     \end{array}
     \right.
 \end{equation}
describes \textbf{pss} with
 \begin{equation}\label{eq:coupledKdVfijEx}
 \begin{array}{ll}
 f_{11}=u+v,&\qquad f_{12}=-u_{xx}-v_{xx}+2(vu^2+uv^2)-\eta^2(u+v)+\eta(v_x-u_x),\\
 f_{21}=\eta,&\qquad f_{22}=-\eta^3+2(vu_x-uv_x)+2\eta uv,\\
 f_{31}=v-u ,& \qquad f_{32}=u_{xx}-v_{xx}+2(uv^2-vu^2)+\eta^2(u-v)+\eta(u_x+v_x),
 \end{array}
 \end{equation}
 where $\eta$ is a parameter.
\end{Example}

\subsection{Main Results}
We introduce the following notation for $u(x,t)$ and $v(x,t)$ and its derivatives,
\beq
u=z_0,\quad \dfrac{\partial^i u}{\partial x^i}=z_i,\quad
v=y_0,\quad \dfrac{\partial^i v}{\partial x^i}=y_i,\quad 1\leq i\leq 3.\nonumber
\eeq
Given a smooth function $Q\left(x,t,z_0,z_1,z_2,y_0,y_1,y_2\right)$, we denote the total derivative of $Q$ with respect to $x$ by $D_xQ$, i.e.
\begin{equation}\label{eq:Dx}
    D_xQ=Q_x+\sum_{i=0}^2Q_{z_i}z_{i+1}+Q_{y_i}y_{i+1}.
\end{equation}
With this notation, the system (\ref{eq:S}) can be expressed as:
\be\label{eq:S'}
\left\{
\begin{array}{l}
z_{0,t}=F\left(x,t,z_0,z_1,z_2,z_3,y_0,y_1,y_2,y_{3}\right),\\
y_{0,t}=G\left(x,t,z_0,z_1,z_2,z_3,y_0,y_1,y_2,y_{3}\right).
\end{array}
\right. 
\ee

%This section is divided into three parts. In the first part, we will provide a characterization result (see Theorem \ref{Lemma}) of the systems (\ref{eq:S'}) describing pseudo-spherical or spherical surfaces. As a consequence, we conclude (see Corollary \ref{prop:Slinear}) that such systems are linear with respect to the third-order terms. In the next part, we will present three classification results of such systems describing \textbf{pss} or \textbf{ss}, with the additional hypothesis that one of the functions $f_{11}$, $f_{21}$ or $f_{31}$, defining the 1-forms (\ref{eq:forms}), depends only on the independent variables $(x,t)$ (see Theorems \ref{th:f21} and \ref{th:f31}). Finally, the third part is dedicated to the proofs of these characterization results.

We now state our characterization result.

\begin{Theorem}\label{Lemma} The necessary and sufficient conditions for the system (\ref{eq:S'}) to describes \textbf{pss} (resp. \textbf{ss}), with associated functions $f_{ij}=f_{ij}(x,t,z_0,z_1,z_2,z_3,y_0,y_1,y_2,y_{3})$, are
\be\label{eq:lemma1-1}
f_{i1,z_j}=0,\quad f_{i1,y_j}=0,\quad f_{i2,z_3}=0,\quad f_{i2,y_3}=0,\quad 1\leq j\leq 3 ,\quad 1\leq i\leq 3,
\ee
\be\label{eq:lemma1-2}
{\left\vert
\begin{array}{cc}
f_{11,z_0}&f_{11,y_0}\\
f_{21,z_0}&f_{21,y_0}
\end{array}
\right\vert}^2
 +
{\left\vert
\begin{array}{cc}
f_{21,z_0}&f_{21,y_0}\\
f_{31,z_0}&f_{31,y_0}
\end{array}
\right\vert}^2
+
{\left\vert
\begin{array}{cc}
f_{11,z_0}&f_{11,y_0}\\
f_{31,z_0}&f_{31,y_0}
\end{array}
\right\vert}^2\neq 0,
\ee
\be\label{eq:lemma1-3}
-f_{11,z_0}F-f_{11,y_0}G-f_{11,t}+D_xf_{12}-f_{31}f_{22}+f_{32}f_{21}=0,
\ee
\be\label{eq:lemma1-4}
-f_{21,z_0}F-f_{21,y_0}G-f_{21,t}+D_xf_{22}-f_{11}f_{32}+f_{12}f_{31}=0,
\ee
\be\label{eq:lemma1-5}
-f_{31,z_0}F-f_{31,y_0}G-f_{31,t}+D_xf_{32}-\delta f_{11}f_{22}+\delta f_{12}f_{21}=0,
\ee
\be\label{eq:lemma1-6}
f_{11}f_{22}-f_{12}f_{21}\neq 0,
\ee
where $\delta=1$ (resp. $\delta=-1$).
\end{Theorem}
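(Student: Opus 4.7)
The plan is to unfold the structure equations \eqref{eq:SELocal} along solutions of \eqref{eq:S'} and exploit the fact that on any solution only $z_{0,t}=F$ and $y_{0,t}=G$ are prescribed by the system, whereas the higher mixed derivatives $z_{j,t}, y_{j,t}$ ($j\geq 1$) and the fourth $x$-derivatives $z_4, y_4$ are unconstrained jet variables that can be realised independently by suitable Cauchy data. Regarding \eqref{eq:SELocal} as identities in these jet variables modulo the two relations of the system forces the dependence restrictions \eqref{eq:lemma1-1}, after which the remaining conditions follow by inspection.

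For necessity, suppose \eqref{eq:S'} describes \textbf{pss} (resp.\ \textbf{ss}) with associated functions $f_{ij}$. Writing the total $t$-derivative along a solution as
\[
\partial_t f_{i1}=f_{i1,t}+\sum_{j=0}^{3}\bigl(f_{i1,z_j}z_{j,t}+f_{i1,y_j}y_{j,t}\bigr),
\]
and noting that the total $x$-derivative $\partial_x f_{i2}$ contains the top-order terms $f_{i2,z_3}z_4+f_{i2,y_3}y_4$ plus lower-order contributions, the three equations in \eqref{eq:SELocal} become identities whose right-hand sides (products of $f_{ij}$'s) depend only on jet variables of order $\leq 3$ in $x$ and carry no $z_{j,t}, y_{j,t}$. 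Eliminating $z_{0,t}=F$ and $y_{0,t}=G$ via the system and matching the coefficients of the remaining independent variables $z_{j,t}, y_{j,t}$ ($j\geq 1$) and $z_4, y_4$ forces
\[
f_{i1,z_j}=f_{i1,y_j}=0\quad (1\leq j\leq 3),\qquad f_{i2,z_3}=f_{i2,y_3}=0\qquad (i=1,2,3),
\]
which is \eqref{eq:lemma1-1}. With these restrictions the operator $D_x$ of \eqref{eq:Dx} is well defined on each $f_{i2}$, and \eqref{eq:SELocal} becomes exactly \eqref{eq:lemma1-3}--\eqref{eq:lemma1-5}. Condition \eqref{eq:lemma1-6} is the non-degeneracy $\omega_1\wedge\omega_2\neq 0$ built into the Definition. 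Finally, \eqref{eq:lemma1-3}--\eqref{eq:lemma1-5} are linear in $(F,G)$ with $3\times 2$ coefficient matrix whose $i$th row is $(f_{i1,z_0},f_{i1,y_0})$; the equivalence of \eqref{eq:S'} with the structure equations forces $(F,G)$ to be uniquely recoverable, so the matrix must have rank $2$, which is precisely \eqref{eq:lemma1-2}.

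Conversely, assume \eqref{eq:lemma1-1}--\eqref{eq:lemma1-6} and let $(u,v)$ be a solution of \eqref{eq:S'}. Set $\omega_i=f_{i1}dx+f_{i2}dt$. Condition \eqref{eq:lemma1-1} reduces the total derivatives along the solution to $\partial_t f_{i1}=f_{i1,t}+f_{i1,z_0}F+f_{i1,y_0}G$ and $\partial_x f_{i2}=D_x f_{i2}$, so \eqref{eq:SELocal} is exactly \eqref{eq:lemma1-3}--\eqref{eq:lemma1-5}, while \eqref{eq:lemma1-6} yields $\omega_1\wedge\omega_2\neq 0$. Hence $\mathbf{g}=\omega_1^2+\omega_2^2$ defines a Riemannian metric of constant Gaussian curvature $K=-\delta$ on an open set, so \eqref{eq:S'} describes \textbf{pss} (resp.\ \textbf{ss}). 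The main obstacle is the independence argument underlying the necessity step: one must justify that $z_{j,t}, y_{j,t}$ ($j\geq 1$) and $z_4, y_4$ are free on solutions, either analytically via local existence for \eqref{eq:S'} with arbitrary Cauchy data, or formally by working in the jet algebra modulo the differential ideal generated by $z_{0,t}-F$ and $y_{0,t}-G$.
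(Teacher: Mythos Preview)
Your proof is correct and follows the same logical skeleton as the paper's: expand the structure equations \eqref{eq:SELocal}, separate off the coefficients of the ``extra'' jet directions to obtain \eqref{eq:lemma1-1}, read off \eqref{eq:lemma1-3}--\eqref{eq:lemma1-5} from what remains, and impose \eqref{eq:lemma1-2} so that the system can be recovered from the structure equations and \eqref{eq:lemma1-6} for non-degeneracy.

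The only substantive difference is how the independence step---what you call the ``main obstacle''---is handled. The paper works on the finite jet space $\mathbb{R}^{10}$ with the exterior differential system generated by the 2-forms $\Lambda^j_i$ (contact forms plus the two equation forms $dz_0\wedge dx+F\,dx\wedge dt$, $dy_0\wedge dx+G\,dx\wedge dt$) and reduces $d\omega_i-W_i\,dx\wedge dt$ modulo that ideal. In that setting the 2-forms $dz_j\wedge dx$, $dy_j\wedge dx$ ($1\le j\le 3$) and $dz_3\wedge dt$, $dy_3\wedge dt$ are manifestly not in the ideal, so their coefficients must vanish---no appeal to Cauchy data or to freeness of $z_{j,t}$ on solutions is needed. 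This is precisely the ``formal'' option you mention at the end, carried out with differential forms rather than jet coordinates; it sidesteps the subtlety that on actual solutions the $z_{j,t}$ ($j\ge 1$) are determined by prolongations of the system. Your analytic phrasing and the paper's EDS phrasing are equivalent, but the latter makes the independence argument immediate.
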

We notice that, when $F$, $G$, and $f_{ij}$ does not depend on $(x,t)$, the above theorem reduces the third-order case of Lemma 1 in \cite{ding}.

The following corollary highlights that the system (\ref{eq:S'}), according to Theorem (\ref{Lemma}), must be linear with respect to the higher-order derivatives $z_3$ and $y_3$.
\begin{Corollary}\label{prop:Slinear}
For a system (\ref{eq:S'}), to describe \textbf{pss} or \textbf{ss} with associated functions $f_{ij}$, satisfying (\ref{eq:lemma1-1})-(\ref{eq:lemma1-6}), it is necessary that
\beq
\left\{
\begin{array}{l}
z_{1,t}=F_1z_3+F_2y_3+F_3,\\
y_{1,t}=G_1z_3+G_2y_3+G_3,
\end{array}
\right.
\eeq
where $F_k,G_k$, $k=1,2,3$, are smooth functions of $(x,t,z_0,z_1,z_2,y_0,y_2,y_2)$.
\end{Corollary}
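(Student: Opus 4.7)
The plan is to read off equations (\ref{eq:lemma1-3})--(\ref{eq:lemma1-5}) as an overdetermined linear system for the unknowns $F$ and $G$ whose right-hand side is visibly affine in the highest-order variables $(z_3, y_3)$, and then invert it using the rank condition (\ref{eq:lemma1-2}).

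First I would unpack the dependence of the $f_{ij}$: by (\ref{eq:lemma1-1}) the coefficients $f_{i1}$ depend only on $(x,t,z_0,y_0)$, while the $f_{i2}$ depend only on $(x,t,z_0,z_1,z_2,y_0,y_1,y_2)$. Expanding the total derivative (\ref{eq:Dx}) gives
\[
D_x f_{i2} \;=\; f_{i2,x} + \sum_{j=0}^{2}\bigl(f_{i2,z_j}\,z_{j+1} + f_{i2,y_j}\,y_{j+1}\bigr),
\]
whose only $(z_3,y_3)$-dependence lies in the two terms $f_{i2,z_2}\,z_3$ and $f_{i2,y_2}\,y_3$. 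Every other quantity appearing in (\ref{eq:lemma1-3})--(\ref{eq:lemma1-5})---namely the $f_{i1,t}$ and the pairwise products of the $f_{ij}$'s---is also independent of $z_3$ and $y_3$. Consequently those three equations rearrange to
\[
f_{i1,z_0}\,F + f_{i1,y_0}\,G \;=\; f_{i2,z_2}\,z_3 + f_{i2,y_2}\,y_3 + R_i, \qquad i=1,2,3,
\]
where each $R_i$ is a smooth function of $(x,t,z_0,z_1,z_2,y_0,y_1,y_2)$ absorbing all the remaining terms.

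Condition (\ref{eq:lemma1-2}) is precisely the statement that the $3\times 2$ matrix with rows $(f_{i1,z_0},\,f_{i1,y_0})$ has rank two, so locally one can select a pair $i<k$ in $\{1,2,3\}$ for which the corresponding $2\times 2$ minor does not vanish. Solving that $2\times 2$ subsystem by Cramer's rule expresses $F$ and $G$ as affine functions of $(z_3,y_3)$ with coefficients depending only on $(x,t,z_0,z_1,z_2,y_0,y_1,y_2)$, giving
\[
F = F_1 z_3 + F_2 y_3 + F_3, \qquad G = G_1 z_3 + G_2 y_3 + G_3,
\]
as required. The third equation in the system is then automatically consistent with this choice, since Theorem \ref{Lemma} guarantees that all three hold simultaneously. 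I do not foresee a real obstacle beyond bookkeeping: the substantive content is the rank-two hypothesis, and the only mild care needed is that the distinguished pair of indices may vary across the jet space, but the smooth functions $F_k, G_k$ so obtained are uniquely determined because any two admissible Cramer reconstructions must agree wherever both minors are nonzero.
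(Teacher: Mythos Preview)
Your argument is correct and essentially the same as the paper's: both exploit that $D_xf_{i2}$ is affine in $(z_3,y_3)$ while all remaining terms in (\ref{eq:lemma1-3})--(\ref{eq:lemma1-5}) are independent of these variables, and both invoke the rank-two condition (\ref{eq:lemma1-2}) to force $F,G$ to inherit this affine structure. The only cosmetic difference is that the paper phrases it as ``take second-order derivatives with respect to $z_3,y_3$ and use (\ref{eq:lemma1-2}) to conclude the Hessian of $(F,G)$ in those variables vanishes,'' whereas you solve the $2\times 2$ subsystem explicitly via Cramer's rule; your version is slightly more constructive but the content is identical.
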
 
\begin{proof}
By taking second order derivatives of equations (\ref{eq:lemma1-3})-(\ref{eq:lemma1-5}) with respect to $z_3$ and $y_3$, follows from equation (\ref{eq:lemma1-2}) that $F$ and $G$ are linear with respect $z_3$ and $y_3$.
\end{proof}

%\begin{cor}\label{cor:Slinear} There is no system of type (\ref{eq:S}), depending on $u_{xxx}$ or $v_{xxx}$ in a nonlinear way, describing \textbf{pss} or \textbf{ss}, with associated functions $f_{ij}$, satisfying (\ref{eq:lemma1-1})-(\ref{eq:lemma1-6}).
%\end{cor}

In the following characterizations, we will assume that the system of evolution equations (\ref{eq:S}) can not be reduced to a second-order one, by supposing that $F$ and $G$ satisfy the generic condition,
\be\label{eq:irr}
(F_{z_3}^2+G_{z_3}^2)(F_{y_3}^2+G_{y_3}^2)\neq 0,
\ee
up to a zero-measure subset.

\begin{Theorem}\label{th:f21=l}
The system (\ref{eq:S'}) satisfying (\ref{eq:irr}), describes \textbf{pss} (resp. \textbf{ss}) with associated functions $f_{ij}$, satisfying (\ref{eq:lemma1-1})-(\ref{eq:lemma1-6}), such that $f_{21}=\ell$ or $f_{11}=\ell$, for a smooth function $\ell(x,t)$, if, and only if, it is given by
\begin{equation}\label{eq:S-f21=l}
\left(\begin{array}{c}
z_{0,t}\\
y_{0,t}
\end{array}\right)=\frac{1}{W}
\left(\begin{array}{cc}
h_{y_0} & -g_{y_0}\\
-h_{z_0} & g_{z_0}
\end{array}\right)\left(\begin{array}{l}
D_xP-hq+\ell H-g_t\\
D_xH -\delta q g+\delta\ell P-h_t
\end{array}\right),
\end{equation}
where $\delta=1$ (resp. $-1$), $q(x,t,z_0,z_1,y_0,y_1)$, $g(x,t,z_0,y_0)$, $h(x,t,z_0,y_0)$ and $P(x,t,z_0,z_1,z_2,y_0,y_1,y_2)$ are smooth functions satisfying the following generic conditions: $W:=g_{z_0}h_{y_0}-g_{y_0}h_{z_0}\neq 0$, $\ell P-gq\neq 0$,
\begin{equation}\label{eq:irr-f21=l}
   (P_{z_3}^2+H_{z_3}^2)(P_{y_3}^2+H_{y_3}^2)\neq 0, 
\end{equation}
where,
\begin{equation}\nonumber
    H:=\frac{1}{g}(Ph+D_xq-\ell_t).
\end{equation}
Furthermore, for the case when $f_{21}=\ell$, the associated functions are
\be\label{eq:fij-f21=l}
\begin{array}{lll}
f_{11}=g,& f_{21}=\ell, & f_{31}= h,\\
f_{12}=P,& f_{22}=q,& f_{32}=H.
\end{array}
\ee
For $f_{11}=\ell$, the associated functions are as follows,
\be\label{eq:fij-f11=l}
\begin{array}{lll}
f_{11}=\ell,& f_{21}=g,&f_{31}=- h,\\
f_{12}=q,& f_{22}=P,& f_{32}=-H.
\end{array}
\ee
\end{Theorem}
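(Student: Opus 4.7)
The proof is an if-and-only-if, and the two cases $f_{21}=\ell$ and $f_{11}=\ell$ are tied together by the symmetry of (\ref{eq:SE}) under the exchange $(\omega_1,\omega_2,\omega_3)\mapsto(\omega_2,\omega_1,-\omega_3)$, which preserves the structure equations with the same $\delta$ and, on associated functions, is precisely the relabeling that carries (\ref{eq:fij-f21=l}) to (\ref{eq:fij-f11=l}). It therefore suffices to treat the $f_{21}=\ell$ case; the $f_{11}=\ell$ case then follows by applying this transformation.

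For necessity, I will assume (\ref{eq:S'}) describes \textbf{pss} (resp.\ \textbf{ss}) with $f_{21}=\ell(x,t)$ and associated functions satisfying (\ref{eq:lemma1-1})--(\ref{eq:lemma1-6}). By (\ref{eq:lemma1-1}) I can write $g:=f_{11}$, $h:=f_{31}$ as functions of $(x,t,z_0,y_0)$ only, and $P:=f_{12}$, $q:=f_{22}$, $H:=f_{32}$ as functions free of $z_3,y_3$. Because $f_{21,z_0}=f_{21,y_0}=0$, the nondegeneracy (\ref{eq:lemma1-2}) collapses to $W:=g_{z_0}h_{y_0}-g_{y_0}h_{z_0}\neq 0$. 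Plugging $f_{21}=\ell$ into (\ref{eq:lemma1-4}) gives $D_xq+Ph-gH-\ell_t=0$; since every term other than $D_xq$ is independent of $z_3,y_3$, the coefficients of $z_3$ and $y_3$ in $D_xq=q_x+\sum q_{z_i}z_{i+1}+q_{y_i}y_{i+1}$ must vanish, which forces $q_{z_2}=q_{y_2}=0$, so $q=q(x,t,z_0,z_1,y_0,y_1)$, and solving yields $H=(Ph+D_xq-\ell_t)/g$. Substituting $f_{21}=\ell$ into (\ref{eq:lemma1-3}) and (\ref{eq:lemma1-5}) produces the linear system
\begin{equation*}
\begin{pmatrix} g_{z_0} & g_{y_0} \\ h_{z_0} & h_{y_0} \end{pmatrix}\begin{pmatrix} F \\ G \end{pmatrix}=\begin{pmatrix} D_xP-hq+\ell H-g_t \\ D_xH-\delta qg+\delta\ell P-h_t \end{pmatrix},
\end{equation*}
which inverts by Cramer's rule (using $W\neq 0$) to exactly (\ref{eq:S-f21=l}). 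Condition (\ref{eq:lemma1-6}) reads $\ell P-gq\neq 0$, and (\ref{eq:irr}) translates to (\ref{eq:irr-f21=l}) using $(D_xP)_{z_3}=P_{z_2}$, $(D_xH)_{z_3}=H_{z_2}$ (and likewise for $y_3$) together with the fact that the coefficient matrix above has determinant $W\neq 0$.

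Sufficiency is a direct verification. Given data $q,g,h,P,\ell$ with the generic hypotheses, define $H$ by the stated formula, $F,G$ by (\ref{eq:S-f21=l}), and $f_{ij}$ by (\ref{eq:fij-f21=l}). The dependence conventions on $g,h,\ell,q,P,H$ yield (\ref{eq:lemma1-1}); $W\neq 0$ yields (\ref{eq:lemma1-2}); the definition of $H$ is precisely (\ref{eq:lemma1-4}); the construction of $F,G$ unwinds to (\ref{eq:lemma1-3}) and (\ref{eq:lemma1-5}); and $\ell P-gq\neq 0$ is (\ref{eq:lemma1-6}). Theorem \ref{Lemma} then concludes that the system describes \textbf{pss} (resp.\ \textbf{ss}).

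The main obstacle is careful bookkeeping of the dependencies of $D_xq$, $D_xP$, $D_xH$ on the jet variables $z_i,y_i$: it is this bookkeeping that forces $q_{z_2}=q_{y_2}=0$ and that makes the translation between (\ref{eq:irr}) and (\ref{eq:irr-f21=l}) transparent. A secondary subtlety is the tacit assumption $g\neq 0$ required to solve for $H$, which must be justified as part of the generic hypothesis in the theorem (it cannot be obtained from $W\neq 0$ and $\ell P-gq\neq 0$ alone).
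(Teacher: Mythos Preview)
Your proof is correct and follows essentially the same route as the paper's: reduce to $f_{21}=\ell$ via the symmetry $(\omega_1,\omega_2,\omega_3)\mapsto(\omega_2,\omega_1,-\omega_3)$, read off the dependence structure from Theorem~\ref{Lemma}, use (\ref{eq:lemma1-4}) to force $q_{z_2}=q_{y_2}=0$ and solve for $H$, then invert (\ref{eq:lemma1-3}) and (\ref{eq:lemma1-5}) for $F,G$. One small remark: the paper handles your flagged subtlety about $g\neq 0$ by the parenthetical ``$f_{11}\neq 0$ (otherwise $W=0$),'' which is to be read in the generic sense---$W\neq 0$ forces $g$ nonconstant, hence $g\neq 0$ off a lower-dimensional set---so you need not treat it as an additional hypothesis.
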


\begin{Remark}
We observe that, in Theorem (\ref{th:f21=l}) the systems describing \textbf{pss} and \textbf{ss} with associated functions satisfying $f_{21}=\ell$ or $f_{11}=\ell$ are the same. However, it is worthwhile to notice that they may have distinct associated linear problems depending on whether the associated functions $f_{ij}$ are given by (\ref{eq:fij-f21=l}) or (\ref{eq:fij-f11=l}).
\end{Remark}   

\begin{Theorem}\label{th:f31=l}
    The system (\ref{eq:S'}) satisfying (\ref{eq:irr}), describes \textbf{pss} (resp. \textbf{ss}) with associated functions $f_{ij}$, satisfying (\ref{eq:lemma1-1})-(\ref{eq:lemma1-6}), such that $f_{31}=\ell$, for a smooth function $\ell(x,t)$, if, and only if, it is given by
\begin{equation}\label{eq:S-f31=l}
\left(\begin{array}{c}
z_{0,t}\\
y_{0,t}
\end{array}\right)=\frac{1}{W}
\left(\begin{array}{cc}
h_{y_0} & -g_{y_0}\\
-h_{z_0} & g_{z_0}
\end{array}\right)\left(\begin{array}{l}
D_xP+hq-\ell H-g_t\\
D_xH -qg+\ell P-h_t
\end{array}\right),
\end{equation}
where $\delta=1$ (resp. $-1$), $q(x,t,z_0,z_1,y_0,y_1)$, $g(x,t,z_0,y_0)$, $h(x,t,z_0,y_0)$ and $P(x,t,z_0,z_1,z_2,y_0,y_1,y_2)$ are smooth functions satisfying the following generic conditions: $W:=g_{z_0}h_{y_0}-g_{y_0}h_{z_0}\neq 0$, $D_xq\neq \ell_t$,
\begin{equation}\label{eq:irr-f31=l}
   (P_{z_3}^2+H_{z_3}^2)(P_{y_3}^2+H_{y_3}^2)\neq 0, 
\end{equation}
where,
\begin{equation}\nonumber
   H:=\frac{1}{g}(Ph+\delta D_xq-\delta \ell_t).
\end{equation}
Moreover, the associated functions are
\be\label{eq:fij-f31=l}
 \begin{array}{lll}
 f_{11}=g,& f_{21}=h, & f_{31}= \ell,\\
 f_{12}=P,& f_{22}=H,& f_{32}=q.
 \end{array}
\ee
\end{Theorem}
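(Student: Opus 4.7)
The proof follows from Theorem \ref{Lemma} by carefully unpacking the conditions (\ref{eq:lemma1-1})--(\ref{eq:lemma1-6}) under the additional hypothesis $f_{31}=\ell(x,t)$. The plan is to set $g:=f_{11}$, $h:=f_{21}$, $P:=f_{12}$, $H:=f_{22}$, and $q:=f_{32}$. By (\ref{eq:lemma1-1}), $g$ and $h$ depend only on $(x,t,z_0,y_0)$, while $P$, $H$, $q$ depend on at most $(x,t,z_0,z_1,z_2,y_0,y_1,y_2)$. Since $f_{31}=\ell(x,t)$ kills the entries $f_{31,z_0}=f_{31,y_0}=0$ in condition (\ref{eq:lemma1-2}), the non-degeneracy collapses to $W:=g_{z_0}h_{y_0}-g_{y_0}h_{z_0}\neq 0$, and (\ref{eq:lemma1-6}) becomes $gH-Ph\neq 0$.

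For the necessity direction, the third structure equation (\ref{eq:lemma1-5}), in view of $\ell_{z_0}=\ell_{y_0}=0$, reduces to $-\ell_t+D_xq=\delta(gH-Ph)$. Solving for $H$ yields the stated formula $H=(Ph+\delta D_xq-\delta\ell_t)/g$, and the non-degeneracy $gH-Ph\neq 0$ becomes $D_xq\neq \ell_t$. An essential intermediate step is to show that $q$ is independent of $z_2$ and $y_2$: since $H$ depends on at most $(z_2,y_2)$ by (\ref{eq:lemma1-1}), but $D_xq$ would contribute $q_{z_2}z_3+q_{y_2}y_3$ if $q$ depended on $z_2$ or $y_2$, matching the coefficients of $z_3$ and $y_3$ on both sides of $D_xq=\delta(gH-Ph)+\ell_t$ forces $q_{z_2}=q_{y_2}=0$. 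With this in hand, the first two structure equations (\ref{eq:lemma1-3})--(\ref{eq:lemma1-4}) form a linear system in $F$ and $G$ whose coefficient matrix has determinant $W\neq 0$; Cramer's rule produces exactly (\ref{eq:S-f31=l}), and the irreducibility (\ref{eq:irr}) translates into (\ref{eq:irr-f31=l}) because the linear passage between $(F_{z_3},G_{z_3},F_{y_3},G_{y_3})$ and $(P_{z_3},H_{z_3},P_{y_3},H_{y_3})$ is invertible by $W\neq 0$.

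Conversely, for sufficiency, given data $(g,h,q,P,\ell)$ with the stated generic conditions, I would define $H$ by the formula and the $f_{ij}$ by (\ref{eq:fij-f31=l}). The independence of $q$ from $(z_2,y_2)$ ensures $H$ and the $f_{i2}$ depend on at most $(x,t,z_0,z_1,z_2,y_0,y_1,y_2)$, so (\ref{eq:lemma1-1}) holds. Conditions (\ref{eq:lemma1-2}) and (\ref{eq:lemma1-6}) follow from $W\neq 0$ and $D_xq\neq\ell_t$, respectively. Equation (\ref{eq:lemma1-5}) holds by construction of $H$, and (\ref{eq:lemma1-3})--(\ref{eq:lemma1-4}) hold because $F,G$ are built by inverting precisely those equations. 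The main obstacle in the argument is the step forcing $q$ to be independent of $(z_2,y_2)$, which requires careful tracking of the $z_3,y_3$-dependence through the total derivative operator $D_x$; once that is secured, the rest is an organized verification of the characterization provided by Theorem \ref{Lemma}.
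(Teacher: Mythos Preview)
Your proposal is correct and follows essentially the same approach as the paper's proof: both reduce (\ref{eq:lemma1-2}) to $W\neq 0$, use the third structure equation (\ref{eq:lemma1-5})---which loses its $F,G$ terms since $f_{31,z_0}=f_{31,y_0}=0$---to force $q_{z_2}=q_{y_2}=0$ by differentiating in $z_3,y_3$ and then to solve for $H$, and finally invert (\ref{eq:lemma1-3})--(\ref{eq:lemma1-4}) for $F,G$. The only difference is cosmetic ordering; the logical content is identical.
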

Applications of the Theorems \ref{th:f21=l} and \ref{th:f31=l}  provide new examples and new families of systems of evolution equations. The next corollary is an application of Theorem \ref{th:f31=l}.
\begin{Corollary}\label{cor:NLS}
    The family of nonlinear Schr\"odinger type,
\begin{equation}
   % iq_t-\delta k\alpha(t)|q|^2q+\alpha(t)q_{xx}+i\beta(t)(q_{xxx}-3\delta k|q|^2q_x)=0,
    iq_t+\alpha(t)(q_{xx}-\delta k|q|^2q)+i\beta(t)(q_{xxx}-3\delta k|q|^2q_x)=0,
\end{equation}
where, $k>0$ is a constant, $\delta=1$ (resp. $\delta=-1$), $\alpha(t),\beta(t)$ are smooth functions of $t$, or equivalently in real form $(q=z_0+iy_0)$,
\begin{equation}
    \left\{\begin{array}{l}
         z_{0,t}= \beta(t)\left(3\delta k(z_0^2+y_0^2)z_{1}-z_{3}\right)+\alpha(t)\left(\delta k (z_0^2+y_0^2)y_0-y_{2}\right),\\
         z_{0,t}=\beta(t)\left(3\delta k(z_0^2+y_0^2)y_1-y_3\right)+\alpha(t)\left(-\delta k (z_0^2+y_0^2)z_0+z_{2}\right),
    \end{array}\right.
\end{equation}
describes \textbf{pss} (resp. \textbf{ss}) with associated functions,
\begin{equation}\begin{array}{l}
%f_{11}= z_0+y_0,\qquad     f_{21}=y_0-z_0,\qquad f_{31}=\eta, \\

f_{11}= \sqrt{k}(z_0+y_0),\qquad     f_{21}=\sqrt{k}(y_0-z_0),\qquad f_{31}=\eta, \\
f_{12}=-\beta\sqrt{k}(y_2+z_{2})-\sqrt{k}(\beta \eta+\alpha)(y_1-z_{1})+f_{11}(\eta\alpha+\eta^2\beta+\delta \beta k(z_0^2+y_0^2)), \\
%f_{22}=-\beta(y_2-z_{2})+(\beta \eta+\alpha)(z_{1}+y_1)+ (y_0-z_0)(\beta(\eta^2+y_0^2+z_0^2)+\eta\alpha),    \\
f_{22}=-\beta\sqrt{k}(y_2-z_{2})+\sqrt{k}(\beta\eta+\alpha)(y_1+z_{1})+f_{21}(\eta\alpha+\eta^2\beta +\delta\beta k(z_0^2+y_0^2)),\\
f_{32}=2\delta k\beta (y_0z_{1}-z_0y_1)+(\eta\beta+\alpha)(\delta k(z_0^2+y_0^2)+\eta^2),
\end{array}
\end{equation}
where $\eta$ is a parameter.
\end{Corollary}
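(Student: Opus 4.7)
The plan is to derive the corollary as a direct application of Theorem \ref{th:f31=l} by specializing its free data. I would set $\ell=\eta$ (a real parameter, so $\ell_t=0$), $g=\sqrt{k}(z_0+y_0)$, and $h=\sqrt{k}(y_0-z_0)$, for which
\[
W=g_{z_0}h_{y_0}-g_{y_0}h_{z_0}=2k\neq 0,\qquad
\frac{1}{W}\begin{pmatrix}h_{y_0}&-g_{y_0}\\-h_{z_0}&g_{z_0}\end{pmatrix}=\frac{1}{2\sqrt{k}}\begin{pmatrix}1&-1\\1&1\end{pmatrix}.
\]
Then I would take $q$ equal to the expression stated for $f_{32}$ in the corollary (smooth in $(x,t,z_0,z_1,y_0,y_1)$) and $P$ equal to the expression stated for $f_{12}$ (smooth in $(x,t,z_0,z_1,z_2,y_0,y_1,y_2)$). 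The irreducibility condition \eqref{eq:irr-f31=l} is verified from the fact that $P$ depends nontrivially on $z_2,y_2$ with coefficient $-\beta\sqrt{k}$, and the induced $H$ inherits $z_2,y_2$-dependence through $\delta D_x q$.

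The central step is to compute
\[
H=\frac{1}{g}\bigl(Ph+\delta D_x q-\delta\ell_t\bigr)=\frac{1}{g}\bigl(Ph+\delta D_x q\bigr)
\]
and verify it coincides with the $f_{22}$ stated in the corollary. Expanding $D_x q$ via \eqref{eq:Dx} produces terms involving $z_2,y_2$ and quadratic $(z_0^2+y_0^2)$ contributions paired with $z_1,y_1$, while $Ph$ contributes cubic terms of the form $\delta\beta k(z_0^2+y_0^2)(y_0^2-z_0^2)$ along with quadratic-quadratic cross terms. A careful grouping by powers of $\alpha,\beta,\eta$ reveals that $g=\sqrt{k}(z_0+y_0)$ is a common factor of $Ph+\delta D_x q$, and the quotient simplifies to exactly the stated $f_{22}$.

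Once $H=f_{22}$ is established, substituting $(g,h,\ell,q,P,H)$ into \eqref{eq:S-f31=l} and using $g_t=h_t=0$ together with the explicit inverse above yields
\[
\sqrt{k}\,z_{0,t}=\tfrac12\bigl[(D_x P-hq+\eta H)-(D_x H-\delta qg+\delta\eta P)\bigr],
\]
and analogously for $\sqrt{k}\,y_{0,t}$ with a sum replacing the difference. Expanding $D_x P$ and $D_x H$ introduces the $z_3,y_3$ contributions from the $z_2,y_2$ dependence of $P$ and $H$, and after collecting the coefficients of $\alpha$ and $\beta$ separately, the system reduces to the third-order NLS equations stated in the corollary. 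The associated functions then follow at once from \eqref{eq:fij-f31=l}. The main obstacle is purely computational: the verification $H=f_{22}$ in the second paragraph requires careful bookkeeping of $\delta$ signs and cubic cancellations in $(z_0,y_0)$, while the final substitution into \eqref{eq:S-f31=l} is a lengthy but mechanical expansion.
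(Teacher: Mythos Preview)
Your approach is correct and essentially identical to the paper's: the paper also derives the corollary from Theorem~\ref{th:f31=l} by setting $\ell=\eta$, $g=\sqrt{k}(z_0+y_0)$, $h=\sqrt{k}(y_0-z_0)$, with $P=f_{12}$ and $q=f_{32}$, and then (implicitly) checking that the induced $H$ equals $f_{22}$. One small slip: the displayed expression you wrote for $\sqrt{k}\,z_{0,t}$ has the signs of Theorem~\ref{th:f21=l} rather than Theorem~\ref{th:f31=l} (the correct first component is $D_xP+hq-\eta H$, and the second is $D_xH-qg+\eta P$, with no extra $\delta$), so be sure to use the right formula when you carry out the expansion.
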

\begin{proof}
    It follows from Theorem \ref{th:f31=l}, by setting $\ell(x,t)=\eta$, $g=\sqrt{k}(z_0+y_0)$, $h=\sqrt{k}(y_0-z_0)$
\begin{equation}
    \begin{array}{l}
         P=-\beta\sqrt{k}(y_2+z_{2})-\sqrt{k}(\beta \eta+\alpha)(y_1-z_{1})+g(\eta\alpha+\eta^2\beta+\delta k\beta(z_0^2+y_0^2)), \\
         H=-\beta\sqrt{k}(y_2-z_{2})+\sqrt{k}(\beta\eta+\alpha)(y_1+z_{1})+h(\eta\alpha+\eta^2\beta +\delta\beta k(z_0^2+y_0^2)),\\
         Q=2\delta k\beta (y_0z_{1}-z_0y_1)+(\eta\beta+\alpha)(\delta k(z_0^2+y_0^2)+\eta^2).
    \end{array}
\end{equation}
\end{proof}
\begin{Remark}
    The nonlinear Schr\"odinger equations given in Examples \ref{3NLS+} and \ref{3NLS-} are derived from Corollary \ref{cor:NLS} by setting $k=1$, with $\delta=1$ and $\delta=-1$ respectively.
\end{Remark}
Motivated by the coupled KdV system (\ref{eq:coupledKdV}), we aim to provide characterizations for systems of the following type,
\begin{equation}\label{eq:coupledKdV-L}
\left\{
\begin{array}{l}
z_{0,t}=az_3+L_{11}(z_0,y_0)z_1+L_{12}(z_0,y_0)y_1+L_{13}(z_0,y_0),\\
y_{0,t}=by_3+L_{21}(z_0,y_0)z_1+L_{22}(z_0,y_0)y_1+L_{23}(z_0,y_0),
\end{array}
\right.
\end{equation}
where, $a$ and $b$ are constants, such that $a^2+b^2\neq 0$ and $L_{ij}$ are smooth functions of $(z_0,y_0)$. The characterizations will describe whether these systems describes \textbf{pss} or \textbf{ss}, under the assumption that the associated functions $f_{ij}$ do not depend on the independent variables $(x,t)$.

For the remainder of this section, we assume that the associated functions $f_{ij}$ do not depend on the independent variables $x$ and $t$. The first characterization of the system (\ref{eq:coupledKdV-L}) describing \textbf{pss} (resp. \textbf{ss}) considers $f_{21}=\eta$ or $f_{11}=\eta$. It demonstrates that the third-order coefficients of such systems must be equal, $a=b$. Additionally, the classification holds under transformations of the form $(z_0,y_0)\rightarrow(z_0+c_1,y_0+c_2)$, where $c_i$ are constants. This implies that for any choice of $c_1$ and $c_2$, the resulting system for $z_0+c_1$ and $y_0+c_2$ describes \textbf{pss} (resp. \textbf{ss}).

\begin{Theorem}\label{thm:L-f21}
    The system (\ref{eq:coupledKdV-L}) describes \textbf{pss} (resp. \textbf{ss}) with associated functions $f_{ij}$ that are independent of $(x,t)$, where $f_{21}=\eta$ or or $f_{11}=\eta$ if and only if, up transformations of type $(z_0,y_0)\rightarrow(z_0+c_1,y_0+c_2)$, $c_i\in \mathbb{R}$, it is given by
\begin{equation}\label{eq:coupledKdV-f21eta}
\begin{split}
z_{0,t}=&az_3+a\left(y_0q_{y_0}-\delta\eta^2+z_0p_{z_0}+p\right)z_1+z_0a\left(p_{y_0}-q_{y_0}\right)y_1+\tfrac{a\eta}{\gamma}(p-q)q_{y_0},\\
y_{0,t}=&ay_3+y_0a\left(p_{z_0}-q_{z_0}\right)z_1+a\left(z_0q_{z_0}-\delta\eta^2+y_0p_{y_0}+p\right)y_1-\tfrac{a\eta}{\gamma}(p-q)q_{z_0},
\end{split}
\end{equation}
where, $\delta=1$ (resp. $-1$), $p$ is an arbitrary smooth function of $(z_0,y_0)$, $q$ given by
\begin{equation}\nonumber
q=\tfrac{1}{2}(q_2^2-\delta q_1^2)+c,\qquad q_i= a_iz_0+b_iy_0,\qquad i=1,2, 
\end{equation}
for $c,a_i,b_i$, $i=1,2$ constants such that $\gamma =a_1b_2-a_2b_1\neq 0$. Moreover, if $f_{21}=\eta$, the associated functions $f_{ij}$ are given by
\begin{align*}\nonumber
&f_{11}=q_1,\qquad f_{21}=\eta,\qquad f_{31}=q_2,\\
&f_{12}=aa_1z_2+ab_1y_2-a\eta (a_2 z_1+b_2 y_1)+aq_1p,\\
&f_{22}=a\gamma(z_0 y_1-y_0 z_1)+a\eta q,\\
&f_{32}=aa_2z_2+ab_2y_2-a\delta\eta(a_1 z_1+b_1y_1)+aq_2p.
\end{align*}
Otherwise, if $f_{11}=\eta$,
\begin{align*}\nonumber
&f_{11}=\eta,\qquad f_{21}=q_1,\qquad f_{31}=-q_2,\\
&f_{12}=a\gamma(z_0 y_1-y_0 z_1)+a\eta q,\\
&f_{22}=aa_1z_2+ab_1y_2-a\eta (a_2 z_1+b_2 y_1)+aq_1p,\\
&f_{32}=-aa_2z_2-ab_2y_2+a\delta\eta(a_1 z_1+b_1y_1)-aq_2p.
\end{align*}
\end{Theorem}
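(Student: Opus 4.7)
The plan is to apply Theorem~\ref{th:f21=l} with $\ell(x,t)=\eta$ constant and the additional assumption that $g,h,P,q$ (hence all $f_{ij}$) are independent of $(x,t)$, and then enforce that the right-hand side of (\ref{eq:S-f21=l}) matches the restricted form (\ref{eq:coupledKdV-L}) order-by-order in the jet variables $(z_1,z_2,z_3,y_1,y_2,y_3)$. Under these hypotheses $g_t=h_t=\ell_t=0$ and $H=(Ph+D_xq)/g$, so the two expressions to match are pure $x$-derivative expressions. Both alternatives $f_{21}=\eta$ and $f_{11}=\eta$ are covered by the two labellings (\ref{eq:fij-f21=l}) and (\ref{eq:fij-f11=l}) in Theorem~\ref{th:f21=l}, so it suffices to carry out the analysis for the first and obtain the second by the indicated relabelling.

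First I would inspect the top-order $(z_3,y_3)$ coefficients on the right-hand side of (\ref{eq:S-f21=l}), which only receive contributions from $D_xP$ and from the $D_x^2q$ and $h\,D_xP$ terms inside $D_xH=\tfrac{1}{g}D_x(Ph)+\tfrac{1}{g}D_x^2q-\tfrac{D_xg}{g^2}(Ph+D_xq)$. Comparison with the prescribed behaviour $az_3$ (no $y_3$) in the first equation and $by_3$ (no $z_3$) in the second yields four algebraic conditions on $P_{z_2},P_{y_2},q_{z_1},q_{y_1}$, whose only solutions have $P$ linear in $(z_2,y_2)$ with coefficients depending on $(z_0,y_0)$ alone; combining the two diagonal conditions then forces $a=b$. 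Next, I would require that the $(z_2,y_2)$-coefficients of the final system vanish and that the $(z_1,y_1)$-coefficients depend only on $(z_0,y_0)$, producing an overdetermined system of PDEs on $g,h$ whose integrability forces $g,h$ to be linear in $(z_0,y_0)$; the non-degeneracy (\ref{eq:lemma1-6}) gives $\gamma=a_1b_2-a_2b_1\neq 0$, and the integration constants in $g,h$ are absorbed by the translation $(z_0,y_0)\mapsto(z_0+c_1,y_0+c_2)$, yielding the normalization $g=q_1$, $h=q_2$ with $q_i=a_iz_0+b_iy_0$.

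With $g,h$ linear and $P$ linear in $(z_2,y_2)$, only $q(z_0,z_1,y_0,y_1)$ and the zero-order part of $P$ remain to be determined. The requirement that the coefficients of $z_1$ and $y_1$ in the final system depend only on $(z_0,y_0)$ forces $q$ itself to be independent of $(z_1,y_1)$, and then produces a first-order quasilinear PDE in $(z_0,y_0)$ whose general solution is $q=\tfrac{1}{2}(q_2^2-\delta q_1^2)+c$; the sign $\delta$ enters precisely through the $-\delta qg$ and $-hq$ contributions in (\ref{eq:S-f21=l}), which is the curvature-dependence of the classification. The residual freedom in $P$ collapses to an arbitrary function $p(z_0,y_0)$, and back-substitution into (\ref{eq:S-f21=l}) and (\ref{eq:fij-f21=l}) produces both (\ref{eq:coupledKdV-f21eta}) and the stated $f_{ij}$; the $f_{11}=\eta$ formulas then follow from (\ref{eq:fij-f11=l}). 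The main obstacle is the PDE analysis showing that $q$ must take this specific quadratic form: it requires separating contributions of each monomial in $(z_1,y_1)$ inside $D_xP$ and $D_xH$, combining several integrability conditions, and carefully tracking the $\delta$-dependence throughout.
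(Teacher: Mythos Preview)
Your overall strategy---start from Theorem~\ref{th:f21=l} with $\ell=\eta$ and match the right-hand side of (\ref{eq:S-f21=l}) against the prescribed form (\ref{eq:coupledKdV-L}) order by order in the jet variables---is sound and is essentially what the paper does (the paper works directly from Theorem~\ref{Lemma}, but the order-by-order reduction is the same). However, two of your intermediate claims are actually false, and the argument as written would not go through.

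First, the constraint $a=b$ does \emph{not} follow from the $(z_3,y_3)$ coefficients. Carrying out the top-order comparison you describe gives the four relations $P_{z_2}=ag_{z_0}$, $P_{y_2}=bg_{y_0}$, $q_{z_1}=a(gh_{z_0}-hg_{z_0})$, $q_{y_1}=b(gh_{y_0}-hg_{y_0})$, and these are consistent for any $a,b$. In the paper the equality $a=b$ is obtained one order lower: the compatibility $P_{1,z_1y_1}=P_{1,y_1z_1}$ and $H_{1,z_1y_1}=H_{1,y_1z_1}$ yields $(a-b)g_{z_0y_0}=(a-b)h_{z_0y_0}=0$, and the case $a\neq b$ is then eliminated by a contradiction coming from the $z_1y_1$ coefficient of the second structure equation (cf.\ the Claim in the paper's proof). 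Your ``two diagonal conditions'' cannot replace this step.

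Second, your assertion that ``$q$ itself [is forced to be] independent of $(z_1,y_1)$'' is wrong. In the notation of Theorem~\ref{th:f21=l} one has $q=f_{22}$, and the final answer in the statement of Theorem~\ref{thm:L-f21} reads $f_{22}=a\gamma(z_0y_1-y_0z_1)+a\eta q$, which depends linearly on $(z_1,y_1)$. What actually happens is that the top-order analysis pins down the $(z_1,y_1)$-linear part of $f_{22}$ to be $a\gamma(z_0y_1-y_0z_1)$ (this is exactly the formula $q_{z_1}=a(gh_{z_0}-hg_{z_0})$ above once $g,h$ are linear), and only the \emph{zero-order} part $Q_0(z_0,y_0)$ of $f_{22}$ is subsequently determined to be $a\eta\bigl(\tfrac12(q_2^2-\delta q_1^2)+c\bigr)$. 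The symbol $q$ appearing in the final statement is this zero-order piece divided by $a\eta$, not the $q$ of Theorem~\ref{th:f21=l}; you have conflated the two. (A minor point in the same vein: $\gamma\neq0$ comes from the condition $W\neq0$ in Theorem~\ref{th:f21=l}, not from (\ref{eq:lemma1-6}).)
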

\begin{Remark}
    The coupled KdV system, in Example \ref{ex:coupledKdV}, is derived from Theorem \ref{thm:L-f21} by selecting the parameters $\delta=1$, $a=a_2=-1$, $a_1=b_1=b_2=1$, $c=\eta^2$ and $p=\eta^2-2z_0y_0$.
\end{Remark}
For the subsequent characterization of system (\ref{eq:coupledKdV-L}), we assume $f_{31}=\eta$. It is noteworthy that, in this scenario, the theorem guarantees $a=b$. Furthermore, the characterization remains valid under translations of the dependent variables $(z_0,y_0)$.
\begin{Theorem}\label{thm:L-f31}
    The system (\ref{eq:coupledKdV-L}) describes \textbf{pss} (resp. \textbf{ss}) with associated functions $f_{ij}$ that are independent of $(x,t)$, where $f_{31}=\eta$ if and only if, up transformations of type $(z_0,y_0)\rightarrow(z_0+c_1,y_0+c_2)$, $c_i\in \mathbb{R}$, it is given by
\begin{equation}\label{eq:coupledKdV-f31eta}
\begin{split}
z_{0,t}=&az_3+a\left(p+z_0p_{z_0}-\delta y_0q_{y_0}+\eta^2\right)z_1+z_0a\left(p_{y_0}+\delta q_{y_0}\right)y_1-\tfrac{a\eta}{\gamma}(p+\delta q)q_{y_0},\\
y_{0,t}=&ay_3+y_0a\left(p_{z_0}+\delta q_{z_0}\right)z_1+a\left(p+y_0 p_{y_0}-\delta z_0q_{z_0}+\eta^2\right)y_1+\tfrac{a\eta}{\gamma}(p+\delta q)q_{z_0},
\end{split}
\end{equation}
where, $\delta=1$ (resp. $-1$), $p$ is an arbitrary smooth function of $(z_0,y_0)$, $q$ given by
\begin{equation}\nonumber
q=\tfrac{1}{2}(q_2^2+q_1^2)+c,\qquad q_i= a_iz_0+b_iy_0,\qquad i=1,2, 
\end{equation}
for $c,a_i,b_i$, $i=1,2$ constants such that $\gamma =a_1b_2-a_2b_1\neq 0$. Moreover, the associates functions $f_{ij}$ are given by
\begin{align*}\nonumber
&f_{11}=q_1,\qquad f_{21}=q_2,\qquad f_{31}=\eta,\\
&f_{12}=aa_1z_2+ab_1y_2+a\eta (a_2 z_1+b_2 y_1)+aq_1p,\\
&f_{22}=aa_2z_2+ab_2y_2-a\eta(a_1 z_1+b_1y_1)+aq_2p,\\
&f_{32}=\delta a\gamma (z_0y_1-y_0z_1) -\delta a \eta q.
\end{align*}
\end{Theorem}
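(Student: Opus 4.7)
The plan is to specialize Theorem~\ref{th:f31=l} to the autonomous, constant-$\ell$ case $\ell(x,t)=\eta$, and then exploit the rigidity of the form (\ref{eq:coupledKdV-L}) to pin down the remaining free functions. Setting $\ell=\eta$ and requiring all $f_{ij}$ to be independent of $(x,t)$, Theorem~\ref{th:f31=l} leaves as free data $g=g(z_0,y_0)$, $h=h(z_0,y_0)$, $q=q(z_0,z_1,y_0,y_1)$ and $P=P(z_0,z_1,z_2,y_0,y_1,y_2)$, with $H=(Ph+\delta D_xq)/g$ and $g_t=h_t=\ell_t=0$. The resulting evolution system (\ref{eq:S-f31=l}) must then be matched term by term with (\ref{eq:coupledKdV-L}).

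The first crucial matching is at third order. Since $D_xP$ contributes the third-order terms $P_{z_2}z_3+P_{y_2}y_3$ and $D_xH$ contributes $(h/g)(P_{z_2}z_3+P_{y_2}y_3)+(\delta/g)(q_{z_1}z_3+q_{y_1}y_3)$, equating the $z_3$- and $y_3$-coefficients in the two equations of (\ref{eq:S-f31=l}) to $(a,0)$ and $(0,b)$ respectively yields four scalar relations in $P_{z_2},P_{y_2},q_{z_1},q_{y_1}$ whose coefficients are built from $g,h$ and their first derivatives. A short linear-algebra argument then forces $a=b$ and shows that $P$ is affine in $(z_2,y_2)$ and $q$ is affine in $(z_1,y_1)$, with coefficients depending only on $(z_0,y_0)$.

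Next, requiring that no $z_2,y_2$ terms appear in (\ref{eq:coupledKdV-L}) outside the leading $az_3,ay_3$, and that the $z_1,y_1$ dependence is linear with coefficients depending only on $(z_0,y_0)$, gives a system of PDEs for $g,h$ and the remaining coefficient functions of $P$ and $q$. Integration shows that $g,h$ must be affine functions of $(z_0,y_0)$ and that $q$ is forced into the quadratic shape $q=\tfrac12(g^2+h^2)+(\text{affine in }z_0,y_0)$ up to an additive constant. Applying the translation $(z_0,y_0)\mapsto(z_0+c_1,y_0+c_2)$ absorbs the additive constants of $g$ and $h$ (solvability of this auxiliary linear system requires exactly the non-degeneracy $\gamma:=a_1b_2-a_2b_1\neq 0$ coming from (\ref{eq:lemma1-2})), leaving $g=q_1=a_1z_0+b_1y_0$, $h=q_2=a_2z_0+b_2y_0$ and $q=\tfrac12(q_1^2+q_2^2)+c$. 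The remaining freedom is carried by an arbitrary smooth function $p(z_0,y_0)$ that parametrizes the admissible $P$, and hence $f_{12}$.

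The main obstacle will be the bookkeeping needed to match coefficients of the various monomials in $z_1,y_1,z_2,y_2$ for both equations of (\ref{eq:S-f31=l}) simultaneously; in particular, deriving the specific quadratic form $q=\tfrac12(q_1^2+q_2^2)+c$ requires eliminating $P$ between the two equations and integrating the resulting PDE for $q$. Once the structural data are pinned down, substituting them back into (\ref{eq:S-f31=l}) and into $H=(Ph+\delta D_xq)/g$ reproduces the stated system (\ref{eq:coupledKdV-f31eta}) and the listed associated functions $f_{ij}$, completing the ``only if'' direction; the converse is a direct verification using Theorem~\ref{Lemma}.
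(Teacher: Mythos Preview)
Your overall strategy---specializing Theorem~\ref{th:f31=l} to $\ell=\eta$ and then matching (\ref{eq:S-f31=l}) against the rigid form (\ref{eq:coupledKdV-L})---is legitimate and slightly different from the paper, which works directly from Theorem~\ref{Lemma} and the raw structure equations (\ref{eq:lemma1-3})--(\ref{eq:lemma1-5}). However, there is a genuine gap in your argument for $a=b$.

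Matching the $z_3$- and $y_3$-coefficients does give four linear relations, but solving them (with $W\neq 0$) yields
\[
P_{z_2}=a\,g_{z_0},\quad P_{y_2}=b\,g_{y_0},\quad q_{z_1}=a\delta(gh_{z_0}-g_{z_0}h),\quad q_{y_1}=b\delta(gh_{y_0}-g_{y_0}h),
\]
and these are consistent for \emph{any} $a,b$; no linear-algebra obstruction appears at this level. In the paper's proof of Theorem~\ref{thm:L-f21} (to which the proof of Theorem~\ref{thm:L-f31} defers), $a=b$ is obtained only after descending to the $z_2,y_2$-level: writing $P=a g_{z_0}z_2+b g_{y_0}y_2+P_1$ and imposing that (\ref{eq:coupledKdV-L}) has no $z_2,y_2$ terms produces equations for $P_{1,z_1},P_{1,y_1}$ whose mixed-partial compatibility gives $(a-b)g_{z_0y_0}=0$ (and the analogous $(a-b)h_{z_0y_0}=0$). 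Even this does not finish the job; one must then argue by contradiction that $a\neq b$ forces $g_{z_0y_0}=h_{z_0y_0}=0$, and a further look at the $z_1y_1$-coefficient of the remaining equation yields $(a-b)W=0$, contradicting $W\neq 0$. This is the content of the ``Claim'' in the paper's proof of Theorem~\ref{thm:L-f21}, and your sketch skips it.

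A secondary issue: you use the symbol $q$ for the datum of Theorem~\ref{th:f31=l} (namely $f_{32}$, which retains $z_1,y_1$-dependence), but then write ``$q=\tfrac12(g^2+h^2)+\cdots$'' as though it were a function of $(z_0,y_0)$ only. In the final answer $f_{32}=\delta a\gamma(z_0y_1-y_0z_1)-\delta a\eta\bigl[\tfrac12(q_1^2+q_2^2)+c\bigr]$, so the quadratic object is only the $(z_1,y_1)$-free part of $f_{32}$ up to the factor $-\delta a\eta$. Keep the two $q$'s separate.
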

\begin{Corollary}\label{cor:familyzy2KdV}
    The family of third-order evolution equations,
    \begin{equation*}
      \begin{array}{l}
           z_{1,t}=-z_3+\delta \alpha^2(z_0^2+y_0^2)z_1,  \\
           y_{1,t}=-y_3+\delta \alpha^2(z_0^2+y_0^2)y_1, 
      \end{array}  
    \end{equation*}
where, $\alpha\neq 0$ is a real constant and $\delta=1$ (resp. $\delta=-1$) describes \textbf{pss} (resp. \textbf{ss}) for associated functions,
\begin{align*}\nonumber
&f_{11}=\tfrac{\sqrt{6}}{3}\alpha z_0,\qquad f_{21}=\tfrac{\sqrt{6}}{3}\alpha y_0,\qquad f_{31}=\eta,\\
&f_{12}=-\tfrac{\sqrt{6}}{3}\alpha\left(z_2+\eta y_1\right)+\delta \tfrac{\sqrt{6}}{9}\alpha z_0\left(\alpha^2z_0^2+\alpha^2y_0^2+3\delta \eta^2\right),\\
&f_{22}=-\tfrac{\sqrt{6}}{3}\alpha\left(y_2-\eta z_1\right)+\delta \tfrac{\sqrt{6}}{9}\alpha y_0\left(\alpha^2z_0^2+\alpha^2y_0^2+3\delta \eta^2\right),\\
&f_{32}=\tfrac{2}{3}\delta\alpha^2(y_0z_1-z_0y_1)+\delta\tfrac{\eta}{3} \left(\alpha^2z_0^2+\alpha^2y_0^2+3\delta \eta^2\right).
\end{align*}
\end{Corollary}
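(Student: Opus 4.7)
The plan is to apply Theorem \ref{thm:L-f31} directly, identifying its parameters so that the target system and its associated functions arise as a special case of (\ref{eq:coupledKdV-f31eta}). Since both evolution equations in the corollary share the same third-order coefficient and the proposed associated function is $f_{31}=\eta$, with $f_{11}$ and $f_{21}$ linear in $(z_0,y_0)$, the setup of that theorem applies verbatim. In particular, the condition $a=b$ forced by Theorem \ref{thm:L-f31} is satisfied with $a=b=-1$, producing the $-z_3$ and $-y_3$ terms.

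The matching proceeds in three steps. First, to reproduce $f_{11}=\tfrac{\sqrt{6}}{3}\alpha z_0$ and $f_{21}=\tfrac{\sqrt{6}}{3}\alpha y_0$, I set $a_1=b_2=\tfrac{\sqrt{6}}{3}\alpha$ and $a_2=b_1=0$, which yields $\gamma=a_1b_2-a_2b_1=\tfrac{2\alpha^2}{3}\neq 0$, as required, and consequently
\[
q=\tfrac{1}{2}(q_1^2+q_2^2)+c=\tfrac{\alpha^2}{3}(z_0^2+y_0^2)+c.
\]
Second, I determine $p$: the target equations contain no $y_1$ term in the first equation and no term independent of $z_i,y_i$, so the coefficient $z_0 a(p_{y_0}+\delta q_{y_0})$ of $y_1$ and the inhomogeneous term $-\tfrac{a\eta}{\gamma}(p+\delta q)q_{y_0}$ of (\ref{eq:coupledKdV-f31eta}) must both vanish; since $q_{y_0}\not\equiv 0$, both conditions are handled by the single choice $p=-\delta q$. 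Third, to fix $c$: matching the coefficient of $z_1$ in the first equation of (\ref{eq:coupledKdV-f31eta}) with $\delta\alpha^2(z_0^2+y_0^2)$ leads, after substituting $p_{z_0}=-\tfrac{2\delta\alpha^2}{3}z_0$ and $q_{y_0}=\tfrac{2\alpha^2}{3}y_0$, to the relation $\delta\alpha^2(z_0^2+y_0^2)+\delta c-\eta^2=\delta\alpha^2(z_0^2+y_0^2)$, hence $c=\delta\eta^2$.

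With these choices $p=-\tfrac{\delta\alpha^2}{3}(z_0^2+y_0^2)-\eta^2$, and by the symmetry $z_0\leftrightarrow y_0$ built into the construction the second evolution equation matches automatically. The remaining task is to substitute $a=-1$, the chosen $a_i,b_i$, and the above $p,q$ into the explicit formulas for $f_{12},f_{22},f_{32}$ in Theorem \ref{thm:L-f31}; for instance, $f_{32}=\delta a\gamma(z_0y_1-y_0z_1)-\delta a\eta q$ becomes $\tfrac{2\delta\alpha^2}{3}(y_0z_1-z_0y_1)+\tfrac{\delta\eta}{3}(\alpha^2z_0^2+\alpha^2y_0^2+3\delta\eta^2)$, exactly as stated. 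There is no conceptual obstacle; the only care required is tracking the factor $\delta$ consistently through the linear and nonlinear coefficients.
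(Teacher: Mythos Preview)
Your proposal is correct and follows precisely the approach the paper intends: the corollary is stated immediately after Theorem~\ref{thm:L-f31} with no proof given, so it is meant to be derived by specializing that theorem, which is exactly what you do. Your parameter choices $a=-1$, $a_1=b_2=\tfrac{\sqrt{6}}{3}\alpha$, $a_2=b_1=0$, $p=-\delta q$, $c=\delta\eta^2$ are the right ones, and your verifications of the evolution equations and of $f_{12},f_{22},f_{32}$ are accurate.
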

\begin{Remark}
   Examples \ref{Ex:familyzy2KdV+} and \ref{Ex:familyzy2KdV-} are derived from Corollary \ref{cor:familyzy2KdV} by choosing $\delta=1$ and $\delta=-1$ respectively.
\end{Remark}

\section{Proofs of the Characterization and Classification Results\label{sec:proofs}}

Let $\mathcal{M}\subset\mathbb{R}^{10}$ be an open connected subset with coordinates $(x,t,z_0,z_1,z_2,z_3,y_0,y_1,y_2,y_{3})$. Define the ideal $\mathcal{I}$ in $\mathcal{M}$, generated by the forms $\Lambda^j_i$, $1\leq j\leq 2, 0\leq i\leq 4,$ as follows:
\begin{equation}\label{eq:IdealI}
\begin{array}{ll}
\Lambda_i^1=dz_i\wedge dt-z_{i+1}dx\wedge dt,\quad &\Lambda_i^2=dy_i\wedge dt- y_{i+1}dx\wedge dt,\qquad 0\leq i\leq 2,\\
\Lambda_3^1=dz_3\wedge dx\wedge dt, \quad &\Lambda_3^2=dy_3\wedge dx\wedge dt,\\
\Lambda_4^1=dz_0\wedge dx+F dx\wedge dt,\quad &\Lambda_4^2=dy_0\wedge dx+G dx\wedge dt,\\
\end{array}
\end{equation}
where $F$ and $G$ are real smooth functions in $\mathcal{M}$.
The solutions of the system (\ref{eq:S}) are related to integral manifolds of the ideal $\mathcal{I}$, as stated in the following proposition:
\begin{Proposition}
   Let $\mathcal{I}$ be the ideal defined by (\ref{eq:IdealI}) with $F$ and $G$ real smooth function. Then, $\mathcal{I}$ is integrable. If $(u(x,t),v(x,t))$ is a smooth solution to the system (\ref{eq:S}), then the map
\begin{equation}\label{eq:integralPhi}
    \Phi(x,t)=(x,t,z_0(x,t),...,z_3(x,t),y_0(x,t),..,y_3(x,t)),
\end{equation}
with $z_0=u, z_{i+1}=z_{0,x},y_0=v,y_{i+1}=y_{i,x}$, defines a regular integral manifold of $\mathcal{I}$. Conversely, any regular integral manifold of $\mathcal{I}$ given by
\begin{equation}\label{eq:integralPsi}
    \Psi(p,q)=(x(p,q),t(p,q),z_0(p,q),...,z_3(p,q),y_0(p,q),...,y_3(p,q)),
\end{equation}
with $dx$ and $dt$ linearly independent, determines a local solution of the system (\ref{eq:S}).
\end{Proposition}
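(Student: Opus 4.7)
The plan is to establish the three assertions in order: closure of $\mathcal{I}$ under $d$, the forward implication, and the converse.

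For the \emph{integrability}, I would verify directly that $d\Lambda_i^j\in\mathcal{I}$ for each generator. The key auxiliary observation is that $dz_i\wedge dx\wedge dt\in\mathcal{I}$ and $dy_i\wedge dx\wedge dt\in\mathcal{I}$ for every $i$: for $0\le i\le 2$ one writes
\begin{equation*}
dz_i\wedge dx\wedge dt=-dx\wedge dz_i\wedge dt=-dx\wedge\Lambda_i^1,
\end{equation*}
using the definition of $\Lambda_i^1$ (the $z_{i+1}\,dx\wedge dx\wedge dt$ term vanishes), and for $i=3$ the form $dz_3\wedge dx\wedge dt=\Lambda_3^1$ lies in $\mathcal{I}$ by construction; the $y$-statements are identical. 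Then $d\Lambda_i^1=-dz_{i+1}\wedge dx\wedge dt$ lies in $\mathcal{I}$ for $0\le i\le 2$, $d\Lambda_3^j=0$, and $d\Lambda_4^1=dF\wedge dx\wedge dt$, which is a sum of the forms $F_{z_i}dz_i\wedge dx\wedge dt$ and $F_{y_i}dy_i\wedge dx\wedge dt$ (the $dx$ and $dt$ contributions drop out by $dx\wedge dx\wedge dt=0$ and $dt\wedge dx\wedge dt=0$), hence is in $\mathcal{I}$.

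For the \emph{forward implication}, let $(u(x,t),v(x,t))$ solve \eqref{eq:S} and define $\Phi$ by $z_i(x,t)=\partial_x^i u$, $y_i(x,t)=\partial_x^i v$. Then $\Phi^*dz_i=z_{i+1}\,dx+z_{i,t}\,dt$, so
\begin{equation*}
\Phi^*\Lambda_i^1=(z_{i+1}\,dx+z_{i,t}\,dt)\wedge dt-z_{i+1}\,dx\wedge dt=0,
\end{equation*}
for $0\le i\le 2$, and $\Phi^*\Lambda_3^1=0$ because $\Phi^*$ produces a $3$-form on a $2$-dimensional domain. Finally $\Phi^*\Lambda_4^1=(F-u_t)\,dx\wedge dt=0$ precisely because $u_t=F$; the analogous computation disposes of $\Lambda_i^2$. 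Linear independence of $\Phi^*dx$ and $\Phi^*dt$ is automatic, so $\Phi$ is a regular integral manifold.

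For the \emph{converse}, suppose $\Psi(p,q)$ is a regular integral manifold with $\Psi^*dx\wedge\Psi^*dt\neq 0$. By the inverse function theorem we may reparametrize locally and use $(x,t)$ themselves as coordinates, viewing $z_i,y_i$ as smooth functions of $(x,t)$. Pulling back $\Lambda_i^1$ gives
\begin{equation*}
(z_{i,x}\,dx+z_{i,t}\,dt)\wedge dt-z_{i+1}\,dx\wedge dt=(z_{i,x}-z_{i+1})\,dx\wedge dt,
\end{equation*}
so the vanishing conditions force $z_{i+1}=\partial_x z_i$ inductively for $0\le i\le 2$, i.e.\ $z_i=\partial_x^i z_0$; the forms $\Lambda_3^j$ vanish automatically. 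The remaining generators give $(F-z_{0,t})\,dx\wedge dt=0$ and $(G-y_{0,t})\,dx\wedge dt=0$, so setting $u:=z_0$, $v:=y_0$ yields a solution of \eqref{eq:S}.

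The calculations are entirely routine; the only point requiring mild care is the bookkeeping of wedge-product signs in showing $dz_i\wedge dx\wedge dt\in\mathcal{I}$, which is the key lemma powering both the closure of $\mathcal{I}$ and the identification of $\Lambda_4^j$ as encoding exactly the evolution equations. The regularity hypothesis $\Psi^*dx\wedge\Psi^*dt\neq 0$ in the converse is essential, as without it one cannot parametrize the integral manifold by $(x,t)$.
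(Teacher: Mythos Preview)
Your proposal is correct and follows essentially the same route as the paper's own proof: direct verification that each $d\Lambda_i^j$ lies in $\mathcal{I}$, explicit computation of the pullbacks $\Phi^*\Lambda_i^j$ in the forward direction, and reparametrization by $(x,t)$ via the inverse function theorem in the converse. Your packaging of the closure step through the auxiliary observation $dz_i\wedge dx\wedge dt\in\mathcal{I}$ is a mild rephrasing of the paper's identities $d\Lambda_i^1=\Lambda_{i+1}^1\wedge dx$ (for $i=0,1$) and $d\Lambda_2^1=-\Lambda_3^1$, but the content is the same.
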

\begin{proof}
By taking the exterior differentiation of the ideal $\mathcal{I}$, it follows that,
$$
d\Lambda_i^1=\Lambda_{i+1}^1\wedge dx,\qquad d\Lambda^2_i=\Lambda_{i+1}^2\wedge dx,\qquad i=0,1,
$$
$$
d\Lambda_2^1 =-\Lambda_3^1,\qquad d\Lambda_3^1=0,\qquad
d\Lambda^2_2 =-\Lambda^2_3,\qquad d\Lambda^2_3=0,\\
$$
$$
d\Lambda_4^1=-\sum_{i=0}^2\left(  F_{z_i}\Lambda_i^1\wedge dx+F_{y_{i}} \Lambda_i^2 \wedge dx\right)+F_{z_3}\Lambda_3^1+F_{y_3}\Lambda^2_3,\nonumber
$$
$$
d\Lambda_4^2=-\sum_{i=0}^2\left(  G_{z_i}\Lambda_i^1\wedge dx+G_{y_{i}} \Lambda^2_i \wedge dx\right)+G_{z_3}\Lambda_3^1+G_{y_3}\Lambda^2_3,\nonumber
$$
then, $\mathcal{I}$ is closed under exterior differentiation, that is $d\mathcal{I}\subset\mathcal{I}$, and therefore by Frobenius Theorem it is integrable (see \cite{ivey}).

For the second part of the proposition, let $\Phi$ defined by (\ref{eq:integralPhi}). Notice that $\Phi_x$ and $\Phi$ are linearly independent, so $\Phi$ is regular. Moreover, the pull-back of $\Lambda^j_i$ by $\Phi$,
\begin{equation}\nonumber
\begin{array}{lll}
    \Phi^*\Lambda^1_i=\left(\frac{\partial^i u}{\partial x^i}-z_i\right)dx\wedge dt, & \Phi^*\Lambda^2_i=\left(\frac{\partial^i v}{\partial x^i}-y_i\right)dx\wedge dt, & 0\leq i\leq 2,\\
    \Phi^*\Lambda_3^1=0,& \Phi^*\Lambda^2_3=0,&\\
    \Phi^*\Lambda^1_4=\left(\frac{\partial u}{\partial t}+F\circ \Phi\right)dx\wedge dt,& \Phi^*\Lambda^2_4=\left(\frac{\partial v}{\partial t}+G\circ \Phi\right)dx\wedge dt, &
   \end{array}
\end{equation}
vanishes for every $(u(x,t),v(x,t))$ smooth solution of system (\ref{eq:S}). Therefore, $\Phi$ is a regular integral manifold of $\mathcal{I}$.
Conversely, let $\Psi(p,q)$ defined as in (\ref{eq:integralPsi}), then 
$$
\Psi^*(dx\wedge dt)=(x_pt_q-x_qt_p)dp\wedge dq.
$$
Since $\Phi$ is regular and $dx$, $dt$ are linearly independent, we have $x_pt_q-x_qt_p\neq 0$. Therefore, there exists a local reparametrization $\bar{\Psi}(x,t)$, of  ${\Psi}(p,q)$, that is also an integrable manifold of $\mathcal{I}$. Therefore, the projections,
$$
\bar{\Psi}(x,t)\mapsto (x,t,z_0(x,t)),
$$
$$
\bar{\Psi}(x,t)\mapsto (x,t,y_0(x,t)),
$$
define smooth graphs of a solution to the system (\ref{eq:S}).
\end{proof}

\subsection*{Proof of Theorem \ref{Lemma}}
\begin{proof}
Let $f_{ij}(x,t,z_0,z_1,z_2,z_3,y_0,y_1,y_2,y_3)$ be smooth functions in $\mathcal{M}$ and consider $\omega_i=f_{i1}dx+f_{i2}dt$, $i=1,2,3$, 1-forms locally defined in $\mathcal{M}$. The structure equations for a \textbf{pss} (resp. \textbf{ss}),
\begin{equation}\label{eq:IdealJ}
\begin{array}{l}
d\omega_{1}=W_1dx\wedge dt,\quad d\omega_{2}=W_2dx\wedge dt,\quad d\omega_{3}=W_3dx\wedge dt,\end{array}
\end{equation}
where $W_1=f_{31}f_{22}-f_{32}f_{21}$, $W_2=f_{11}f_{32}-f_{12}f_{31}$ and $W_3=\delta f_{11}f_{22}-\delta f_{12}f_{21}$, and $\delta=1$ (resp. $\delta=-1$) are satisfied if, and only if, for $i=1,2,3$
\be\nonumber
(f_{i2,x}-f_{i1,t}-W_i)dx\wedge dt+\sum_{j=0}^{2}( f_{i1,z_j}dz_j\wedge dx+f_{i1,y_j} dy_j\wedge dx+f_{i2,z_j}dz_j\wedge dt+f_{i2,y_j} dy_j\wedge dt)=0.
\ee
Considering $\Lambda^j_i=0$, for $0\leq j\leq 2,0\leq 
i\leq 3$ in the ideal (\ref{eq:IdealI}) we have,
\begin{eqnarray}
( f_{i2,x}-f_{i1,t}-W_i-Ff_{i1,z_0}-Gf_{i1,y_0}+\sum_{j=0}^2 f_{i2,z_j}z_{z+1}+f_{i2,y_j}y_{j+1} )dx\wedge dt+\nonumber\\
+\sum_{j=1}^3(f_{i1,z_j}dz_j\wedge dx+f_{i1,y_j}dy_j\wedge dx)+f_{i2,z_3}dz_3\wedge dx+f_{i2,y_3}dy_3\wedge dx=0,
\end{eqnarray}
 for $i=1,2,3,$. Equating to zero the coefficients of the independent 2-forms, we obtain (\ref{eq:lemma1-1}), (\ref{eq:lemma1-3}), (\ref{eq:lemma1-4}) and (\ref{eq:lemma1-5}). Moreover, equation (\ref{eq:lemma1-2}) must hold in order to obtain $F$ and $G$ from equations (\ref{eq:lemma1-3})-(\ref{eq:lemma1-5}). Finally, equation (\ref{eq:lemma1-6}) must also hold, because it is locally equivalent to $\w_1\wedge\w_2\neq 0$.

A straightforward computation shows that the conditions (\ref{eq:lemma1-1})-(\ref{eq:lemma1-6}), are also sufficient for the system to describe \textbf{pss} (resp. \textbf{ss}).
\end{proof}

\subsection*{Proof of Theorem \ref{th:f21=l}}
\begin{proof}
It suffices to prove Theorem (\ref{th:f21=l}) for $f_{21}=\ell$. This follows from the fact that the structure equations (\ref{eq:SE}) are invariant under the transformation,
\beq\label{prop:f11}
\begin{array}{ll}
f_{11}\rightarrow f_{21},\qquad &f_{12}\rightarrow f_{22},\nonumber\\
f_{21}\rightarrow f_{11},\qquad &f_{22}\rightarrow f_{12},\nonumber\\
f_{31}\rightarrow -f_{31},\qquad &f_{32}\rightarrow -f_{32}.\nonumber
\end{array}
\eeq
This invariance allows us to derive results for $f_{11}=\ell$ from the case $f_{21}=\ell$. 

We first establish the necessary conditions. By Theorem (\ref{Lemma}), we know that the functions $f_{ij}$ satisfy equations (\ref{eq:lemma1-1})-(\ref{eq:lemma1-6}) on an open connected subset. From equation (\ref{eq:lemma1-1}), $f_{i1}$ depends on $(x,t,z_0,y_0)$ while $f_{i2}$ depends on $(x,t,z_0,z_1,z_2,y_0,y_1,y_2)$ for $1\leq i\leq 3$. The generic condition (\ref{eq:lemma1-2}) reduces to \begin{equation}\label{eq:W-f21=l}
W=f_{11,z_0}f_{31,y_0}-f_{11,y_0}f_{31,z_0}\neq 0,
\end{equation}
therefore, equations (\ref{eq:lemma1-3}) and (\ref{eq:lemma1-5}) are equivalent to,
\begin{equation}\label{eq:profS-f21=l}
    \left(\begin{array}{c}
         z_{0,t}  \\
         y_{0,t} 
    \end{array}\right)=\frac{1}{W}\left(
\begin{array}{cc}
   f_{31,y_0}  & -f_{11,y_0} \\
  -f_{31,z_0}   & f_{11,z_0}
\end{array}\right)\left(
\begin{array}{c}
   D_xf_{12}-f_{11,t}-f_{31}f_{22}+\ell f_{32}    \\
   D_xf_{32}-f_{31,t}-\delta f_{11}f_{22}+\delta \ell  f_{12}
\end{array}\right).
\end{equation}
Moreover, equation (\ref{eq:lemma1-4}) reduces to,
\begin{equation}\label{eq:L2-f21=l}
   D_x f_{22}-\ell_t-f_{11}f_{32}+f_{12}f_{31}=0.
\end{equation}
By taking the derivatives with respect to $z_3$ and $y_3$ of the above equality, we have, $$f_{22,z_2}=f_{22,y_2}=0,$$
hence $f_{22}$ is a function that depends only on $(x,t,z_0,z_1,y_0,y_1)$. Moreover, since $f_{11}\neq 0$ (otherwise $W=0$), it follows from equation (\ref{eq:L2-f21=l}) that,
\begin{equation}\label{eq:profH-f21=l}
    f_{32}=\frac{1}{f_{11}}(D_xf_{22}-\ell_t+f_{12}f_{31}).
\end{equation}
By denoting $q=f_{22}$, $g=f_{11}$, $h=f_{31}$, $P=f_{12}$ and $H=f_{32}$, we have that equation (\ref{eq:W-f21=l}) reads $W=g_{z_0}h_{y_0}-g_{y_0}h_{z_0}\neq 0$. Equation (\ref{eq:profH-f21=l}) is equivalent to $H=1/g(D_xq-\ell_t+hP)$, and the system (\ref{eq:profS-f21=l}) reduces to (\ref{eq:S-f21=l}). Moreover, the generic condition (\ref{eq:lemma1-6}) amounts to $\ell P-gq\neq 0$. Finally, the generic condition (\ref{eq:irr}) reduces to the condition (\ref{eq:irr-f21=l}).

The converse is a straightforward computation.
\end{proof}

\subsection*{Proof of Theorem \ref{th:f31=l}}
\begin{proof}
We first establish the necessary conditions. By Theorem (\ref{Lemma}), we know that the functions $f_{ij}$ satisfy equations (\ref{eq:lemma1-1})-(\ref{eq:lemma1-6}) on an open connected subset. From equation (\ref{eq:lemma1-1}), $f_{i1}$ depends on $(x,t,z_0,y_0)$ while $f_{i2}$ depends on $(x,t,z_0,z_1,z_2,y_0,y_1,y_2)$ for $1\leq i\leq 3$. The generic condition (\ref{eq:lemma1-2}) reduces to \begin{equation}\label{eq:W-f31=l}
W=f_{11,z_0}f_{21,y_0}-f_{11,y_0}f_{21,z_0}\neq 0,
\end{equation}
therefore, equations (\ref{eq:lemma1-3}) and (\ref{eq:lemma1-5}) are equivalent to,
\begin{equation}\label{eq:profS-f31=l}
    \left(\begin{array}{c}
         z_{0,t}  \\
         y_{0,t} 
    \end{array}\right)=\frac{1}{W}\left(
\begin{array}{cc}
   f_{21,y_0}  & -f_{11,y_0} \\
  -f_{21,z_0}   & f_{11,z_0}
\end{array}\right)\left(
\begin{array}{c}
   D_xf_{12}-f_{11,t}-\ell f_{22}+f_{21} f_{32}    \\
   D_xf_{22}-f_{21,t}- f_{11}f_{32}+ \ell  f_{12}
\end{array}\right).
\end{equation}
Moreover, equation (\ref{eq:lemma1-4}) reduces to,
\begin{equation}\label{eq:L3-f31=l}
   D_x f_{32}-\ell_t-\delta f_{11}f_{22}+\delta \ell f_{12}=0.
\end{equation}
By taking the derivatives with respect to $z_3$ and $y_3$ of the above equality, we have, $$f_{32,z_2}=f_{32,y_2}=0,$$
hence $f_{32}$ is a function that depends only on $(x,t,z_0,z_1,y_0,y_1)$. Moreover, since $f_{11}\neq 0$ (otherwise $W=0$), it follows from equation (\ref{eq:L3-f31=l}) that,
\begin{equation}\label{eq:profH-f31=l}
    f_{22}=\frac{1}{f_{11}}(\delta D_xf_{32}-\delta \ell_t+f_{12}f_{21}).
\end{equation}
By denoting $q=f_{32}$, $g=f_{11}$, $h=f_{21}$, $P=f_{12}$ and $H=f_{22}$, we have that equation (\ref{eq:W-f31=l}) reads $W=g_{z_0}h_{y_0}-g_{y_0}h_{z_0}\neq 0$. Equation (\ref{eq:profH-f31=l}) is equivalent to $H=1/g(\delta D_xq-\delta \ell_t+Ph)$, and the system (\ref{eq:profS-f31=l}) reduces to (\ref{eq:S-f31=l}). Moreover, the generic condition (\ref{eq:lemma1-6}) amounts to $D_xq\neq \ell_t$. Finally, the generic condition (\ref{eq:irr}) reduces to the condition (\ref{eq:irr-f31=l}).

The converse is a straightforward computation.

\end{proof}

\subsection*{Proof of Theorem \ref{thm:L-f21}}
\begin{proof}
Similarly to the argument provided in the proof of Theorem (\ref{th:f21=l}), we will just consider the case in which $f_{21}=\eta$.

We first prove the necessary conditions. According to Theorem \ref{Lemma}, the system (\ref{eq:coupledKdV-L}) describes \textbf{pss} (resp. \textbf{ss}), for associated functions $f_{ij}$, independent of $(x,t)$, satisfying $f_{21}=\eta$, provided that $f_{i1}$ depends solely on $(z_0,y_0)$ and $f_{i2}$ is a function of $(z_0,z_1,z_2,y_0,y_1,y_2)$ for $i=1,2,3$, satisfying

\begin{eqnarray}
&f_{11,z_0}f_{31,y_0}-f_{11,z_0}f_{31,z_0}\neq 0,\label{eq:demothmL-W}\\
   & f_{11}f_{22}-f_{12}\eta\neq 0,\label{eq:demothL-Delta12}
\end{eqnarray}
\be\label{eq:L1-f21}
\begin{split}
&-f_{11,z_0}(az_3+L_{11}z_1+L_{12}y_1+L_{13})-f_{11,y_0}(by_3+L_{21}z_1+L_{22}y_1+L_{23})+\\
&+D_xf_{12}-f_{31}f_{22}+\eta f_{32}=0,
\end{split}
\ee
\be\label{eq:L2-f21}
D_xf_{22}-f_{11}f_{32}+f_{12}f_{31}=0,
\ee
\be\label{eq:L3-f21}
\begin{split}
&-f_{31,z_0}(az_3+L_{11}z_1+L_{12}y_1+L_{13})-f_{31,y_0}(by_3+L_{21}z_1+L_{22}y_1+L_{23})+\\
&+D_xf_{32}-\delta f_{11}f_{22}+\delta \eta f_{12}=0,
\end{split}
\ee
where, according to notation (\ref{eq:Dx}), for functions $f_{i2}(z_0,z_1,z_2,y_0,y_1,y_2)$, 
\begin{equation}\nonumber  D_xf_{i2}=f_{i2,z_0}z_1+f_{i2,z_1}z_2+f_{i2,z_2}z_3+f_{i2,y_0}y_1+f_{i2,y_1}y_2+f_{i2,y_2}y_3,\qquad i=1,2,3.
\end{equation}
Differentiating (\ref{eq:L1-f21})-(\ref{eq:L3-f21}) with respect to $z_3$ and $y_3$, it follows that,
\begin{align*}
    &\begin{aligned}
        &f_{12,z_2}=af_{11,z_0},\quad && f_{12,y_2}=bf_{11,y_0},\\
        &f_{22,z_2}=0, && f_{22,y_2}=0,\\
        &f_{32,z_2}=af_{31,z_0}, && f_{32,y_2}=bf_{31,y_0},
    \end{aligned}
\end{align*}
hence, there exist smooth functions $P_1,Q_1,H_1$ depending on $(z_0,z_1,y_0,y_1)$, such that,
\begin{align*}
    &\begin{aligned}
        &f_{12}=af_{11,z_0}z_2+bf_{11,y_0}y_2+P_1,\\
        &f_{22}=Q_1,\\
        &f_{32}=af_{31,z_0}z_2+bf_{31,y_0}y_2+H_1.
    \end{aligned}
\end{align*}
With this notation, equations (\ref{eq:L1-f21})-(\ref{eq:L3-f21}) reduce to linear polynomials in $z_2$ and $y_2$. The coefficients following $z_2$ and $y_2$, which depend only on $(z_0,z_1,y_0,y_1)$, must vanish, namely
\begin{equation}\label{L-f21:P1H1Q1}
\begin{array}{l}
P_{1,z_1}+af_{11,z_0,z_0}z_1+af_{11,z_0,y_0}y_1+a\eta f_{31,z_0}=0,\\
P_{1,y_1}+bf_{11,z_0,y_0}z_1+bf_{11,y_0,y_0}y_1+b\eta f_{31,y_0}=0,\\
Q_{1,z_1}+af_{11,z_0}f_{31}-af_{31,z_0}f_{11}=0,\\
Q_{1.y_1}+bf_{11,y_0}f_{31}-bf_{31,y_0}f_{11}=0,\\
H_{1,z_1}+af_{31,z_0,z_0}z_1+af_{31z_0,y_0}y_1+a\eta\delta  f_{11,z_0}=0,\\
H_{1,y_1}+bf_{31z_0,y_0}z_1+bf_{31y_0,y_0}y_1+b\eta \delta f_{11,y_0}=0.
\end{array}
\end{equation}
Interpreting the above relations as equations for $P_1$, $H_1$, and $Q_1$, we find that $Q_1$ is integrable. The compatibility conditions $P_{1,z_1,y_1}=P_{1,y_1,z_1}$ and $H_{1,z_1,y_1}=H_{1,y_1,z_1}$ yield $(a-b)f_{11,z_0,y_0}=0$ and $(a-b)f_{31,z_0,y_0}=0$, respectively.

Hence, we have two cases: $a = b$ and $a \neq b$. We assert that $a=b$, as stated in the following claim:

\textbf{Claim:} $a=b$.

\textit{Proof:} We prove this claim by contradiction. Suppose, for the sake of contradiction, that $a\neq b$. Hence, the compatibility conditions for $P_1$ and $H_1$ reduce to $f_{11,z_0,y_0}=f_{31,z_0,y_0}=0$. Then, there exist smooth functions $g_i(z_0),h_i(y_0)$, $i=1,2$, such that $f_{11}=g_1(z_0)+h_1(y_0)$ and $f_{31}=g_2(z_0)+h_2(y_0)$. Moreover, $P_1$, $Q_1$ and $H_1$ are given by,
\begin{align*}
    &P_1=-\tfrac{1}{2}g_{1,z_0z_0}az_1^2-\tfrac{1}{2}h_{1,y_0y_0} by_1^2-g_{2,z_0}a\eta z_1-h_{2,y_0}b\eta y_1+P_0,\\
    &Q_1=(-g_{1,z_0}a(g_2+h_2)+g_{2,z_0}a(g_1+h_1))z_1+(-h_{1,y_0}b(g_2+h_2)+h_{2,y_0}b(g_1+h_1))y_1+Q_1,\\
    &H_1=-\tfrac{1}{2}g_{2,z_0z_0}az_1^2-\tfrac{1}{2}h_{2,y_0y_0}by_1^2-\delta a \eta g_{1,z_0}z_1-\delta \eta h_{1,y_0}by_1+H_0,
\end{align*}
where, $P_0,Q_0,H_0$ are smooth functions of $(z_0,y_0)$. With this notation, equation (\ref{eq:L2-f21}) reduces to a second-degree polynomial in $z_1$ and $y_1$ whose coefficients depend on $(z_0,y_0)$. Therefore, for (\ref{eq:L2-f21}) to hold, we require each of these coefficients to vanish. In particular, the coefficient of $z_1y_1$ becomes:
$$
(a-b)(h_{1,y_0}g_{2,z_0}-h_{2,y_0}g_{1,z_0})=0.
$$
On the other hand, from (\ref{eq:demothmL-W}), we have $g_{1,z_0}h_{2,y_0}-h_{1,y_0}g_{2,z_0}\neq 0$. Therefore, $a-b=0$, which is a contradiction. This concludes the proof of the claim.

From the preceding claim establishing $b = a$, the equations in (\ref{L-f21:P1H1Q1}) become integrable for $P_1$, $Q_1$, and $H_1$, yielding
\begin{equation*}
\begin{array}{l}
P_1=-\tfrac{1}{2}f_{11,z_0z_0}az_1^2-f_{11,z_0y_0}az_1y_1-\tfrac{1}{2}f_{11,y_0y_0}ay_1^2-f_{31,z_0}a\eta z_1-f_{31,y_0}a\eta y_1+P_0,  \\
Q_1=a(f_{11}f_{31,z_0}-f_{31}f_{11,z_0})z_1+a(f_{11}f_{31,y_0}-f_{31}f_{11,y_0})y_1+Q_0,\\
H_1=-\tfrac{1}{2}f_{31,z_0z_0}az_1^2-f_{31,z_0y_0}az_1y_1-\tfrac{1}{2}f_{31,y_0y_0}ay_1^2-f_{11,z_0}a\delta\eta z_1-f_{11,y_0}a\delta\eta y_1+H_0,
\end{array}
\end{equation*}
where $P_0$, $Q_0$ and $H_0$ are functions of $(z_0,y_0)$.

With this notation, equations (\ref{eq:L1-f21})-(\ref{eq:L3-f21}) become polynomials in $z_1$ and $y_1$. Equation (\ref{eq:L2-f21}) becomes a second-degree polynomial, while equations (\ref{eq:L1-f21}) and (\ref{eq:L3-f21}) become third-degree polynomials. The coefficients of these polynomials depend only on functions of $(z_0,y_0)$. For these equations to hold, each of these coefficients must vanish. From the third and second-degree terms in (\ref{eq:L1-f21})-(\ref{eq:L3-f21}), we observe that all third and second-order partial derivatives of $f_{11}$ and $f_{31}$ must vanish. Therefore, we express them as first-degree polynomials, denoted by $q_1$ and $q_2$, respectively
$$
f_{11}=q_1=a_1z_0+b_1y_0+a_0,\quad f_{31}=q_2=a_2z_0+b_2y_0+b_0,
$$
User
where ${a}_i$ and ${b}_i$ are constants that, by equation (\ref{eq:demothmL-W}), must satisfy the condition
$$
\gamma:=a_1b_2-a_2b_1\neq 0.
$$
We can assume $a_0=b_0=0$ up to a transformation $(z_0,y_0)\rightarrow (z_0+c_1, y_0+c_2)$, namely
$$
c_1=\frac{b_1b_0-b_2a_0}{\gamma},\qquad c_2=\frac{a_2a_0-a_1b_0}{\gamma},
$$
therefore,
\begin{equation}\label{eq:demoL-f21-q1q2}
q_1=a_1z_0+b_1y_0,\quad q_2=a_2z_0+b_2y_0.    
\end{equation}
We introduce $p$ and $h$ as functions of $(z_0,y_0)$ given by
\begin{equation}\label{eq:L-f21-ph}
p=\frac{-b_1 H_0+b_2 P_0}{z_0a\gamma},\qquad h=\frac{a_1H_0-a_2P_0}{y_0a\gamma},
\end{equation}
and we rewrite the arbitrary functions $P_0$ and $H_0$ as $P_0=aa_1z_0p+ab_1y_0h$ and $H_0=ab_2y_0h+aa_2z_0p$.
With this notation the equation (\ref{eq:L2-f21}), reduces to
\begin{equation}\label{eq:L-f21-L2z1y10}
(Q_{0,z_0}+ ( q_1 a_1\delta- q_2 a_2)a\eta)z_1+(Q_{0,y_0}+ ( q_1 b_1\delta- q_2 b_2)a\eta)y_1+z_0y_0a\gamma (p-h)=0,
\end{equation}
where both the coefficients of $z_1$ and $y_1$, as well as the independent term vanish. For the terms involving $z_1$ and $y_1$ terms, $q_1$ and $q_2$ are given by (\ref{eq:demoL-f21-q1q2}) and satisfy $( q_1a_1\delta-a_2q_2)_{y_0}=( q_1b_1\delta-q_2b_2)_{z_0}$. This allows us to determine $Q_0$ as,
$$
Q_0=a\eta q,\qquad \quad q=\tfrac{1}{2}(q_2^2-\delta q_1^2)+c,
$$
where $c$ is an integration constant. Furthermore, since $\gamma \neq 0$, the independent term implies $h=p$.
With this notation we have,
\begin{align*}
&f_{11}=q_1,\qquad f_{21}=\eta,\qquad f_{31}=q_2,\\
&f_{12}=aa_1z_2+ab_1y_2-aa_2\eta z_1-ab_2\eta y_1+aq_1p,\\
&f_{22}=-ay_0\gamma z_1+az_0\gamma y_1+a\eta q,\\
&f_{32}=aa_2z_2+ab_2y_2-aa_1\delta\eta z_1-a\delta \eta b_1y_1+aq_2p.
\end{align*}
Then, the condition (\ref{eq:demothL-Delta12}) can be expressed as
$$
-a\eta a_1 z_2-a\eta b_1y_2+(-\gamma a y_0q_1+a\eta^2 a_2)z_1+(\gamma z_0 a q_1-a\eta^2 b_2)y_1+a\eta ( q- p)q_1\neq 0.
$$
This condition is satisfied. Otherwise, each coefficient must vanish. Given that $\gamma = a_1b_2 - a_2b_1 \neq 0$, we have that both $a_1^2 + b_1^2 \neq 0$ and $q_1 \neq 0$. From the coefficients of $z_2$ and $y_2$, we deduce $a\eta a_1 = a\eta b_1 = 0$. Since $a \neq 0$, it implies $\eta = 0$. Consequently, the coefficients of $z_1$ and $y_1$ for $\eta=0$ are equivalent to $\gamma a q_1 = 0$, which leads to a contradiction.

Finally, equations (\ref{eq:L1-f21}) and (\ref{eq:L3-f21}) reduce to first-degree polynomials in $z_1$ and $y_1$. Each of these equations requires setting its coefficients to zero, namely
\begin{align}
&-a_1L_{11}-b_1L_{21}-a\delta \eta^2a_1+a\gamma y_0 q_2+aa_1p+aq_1p_{z_0}=0,\label{eq:L-f21-L1-z1} \\
&-a_1L_{12}-b_1L_{22}-a\delta \eta^2b_1-a\gamma z_0 q_2+ab_1p+aq_1p_{y_0}=0,\label{eq:L-f21-L1-y1}  \\
&-a_1L_{13}-b_1L_{23}-a\eta (q-p)q_2=0,\label{eq:L-f21-L1-0} \\
&-a_2L_{11}-b_2L_{21}-aa_2\delta \eta^2+a\gamma \delta y_0q_1+a_2p+aq_2p_{z_0}=0,\label{eq:L-f21-L3-z1}\\
&-a_2L_{12}-b_2L_{22}-ab_2\delta \eta^2-a\gamma \delta z_0q_1+ab_2p+aq_2p_{y_0}=0,\label{eq:L-f21-L3-y1}\\
&-a_2 L_{13}-b_2L_{23}-a\delta\eta (q-p).\label{eq:L-f21-L3-0}
\end{align}
These equations can be solved for $L_{ij}$, $1\leq i\leq 2$, $1\leq j\leq 3$. By using the identities,
\begin{equation*}
\begin{array}{ll}
a_2q_2-\delta a_1q_1=q_{z_0},\qquad& b_2q_2-\delta b_1q_1=q_{y_0},\\
b_2q_1-b_1q_2=\gamma z_0,\qquad &a_2q_1-a_1q_2=-\gamma y_0,
\end{array}
\end{equation*}
we obtain
\begin{align*}
&L_{11}=ay_0q_{y_0}-a\delta\eta^2+az_0p_{z_0}+ap,\qquad L_{12}=-az_0q_{y_0}+ap_{y_0}z_0,\qquad L_{13}=\tfrac{a\eta}{\gamma}(p-q)q_{y_0},\\
&L_{21}=-ay_0q_{z_0}+ay_0p_{z_0},\qquad L_{22}=az_0q_{z_0}-a\delta\eta^2+y_0ap_{y_0}+ap,\qquad L_{23}=-\tfrac{a\eta}{\gamma}(p-q)q_{z_0}.
\end{align*}
Consequently, the system (\ref{eq:coupledKdV-L}) simplifies to (\ref{eq:coupledKdV-f21eta}). This concludes the first part of the proof.

The converse follows from a straightforward computation.
\end{proof}
\subsection*{Proof of Theorem \ref{thm:L-f31}}
\begin{proof}
The proof follows a similar approach to that of Theorem \ref{thm:L-f21}. By solving the equations derived from setting the high-order coefficients of equations (\ref{eq:lemma1-3})-(\ref{eq:lemma1-5}) to zero, we successively obtain a polynomial of lower order. Hence, we will provide a brief outline of the proof.

By Theorem \ref{Lemma}, the necessary conditions the system (\ref{eq:coupledKdV-L}) describes \textbf{pss} (resp. \text{ss}) with associated functions $f_{ij}$, not depending on $(x,t)$, with $f_{31}=\eta$, are
\begin{eqnarray}
&f_{11,z_0}f_{21,y_0}-f_{11,z_0}f_{21,z_0}\neq 0,\label{eq:demothmL-f31W}\\
   & f_{11}f_{22}-f_{12}f_{21}\neq 0,\label{eq:demothL-f31Delta12}
\end{eqnarray}
\be\label{eq:L1-f31}
\begin{split}
&-f_{11,z_0}(az_3+L_{11}z_1+L_{12}y_1+L_{13})-f_{11,y_0}(bz_3+L_{21}z_1+L_{22}y_1+L_{23})+\\
&+D_xf_{12}-\eta f_{22}+f_{21} f_{12}=0,
\end{split}
\ee

\be\label{eq:L2-f31}
\begin{split}
&-f_{21,z_0}(az_3+L_{11}z_1+L_{12}y_1+L_{13})-f_{21,y_0}(bz_3+L_{21}z_1+L_{22}y_1+L_{23})+\\
&+D_xf_{22}-f_{11} f_{32}+\eta f_{32}=0,
\end{split}
\ee

\be\label{eq:L3-f31}
D_xf_{32}-\delta f_{11}f_{22}+\delta f_{21} f_{12}=0,
\ee
where, according to notation (\ref{eq:Dx}), for functions $f_{i2}(z_0,z_1,z_2,y_0,y_1,y_2)$, 
\begin{equation}\nonumber  D_xf_{i2}=f_{i2,z_0}z_1+f_{i2,z_1}z_2+f_{i2,z_2}z_3+f_{i2,y_0}y_1+f_{i2,y_1}y_2+f_{i2,y_2}y_3,\qquad i=1,2,3.
\end{equation}
From $z_3$ and $y_3$ coefficients of (\ref{eq:L1-f31})-(\ref{eq:L3-f31}), we obtain
\begin{align*}
    &\begin{aligned}
        &f_{12}=af_{11,z_0}z_2+bf_{11,y_0}y_2+P_1,\\
        &f_{22}=af_{21,z_0}z_2+bf_{21,y_0}y_2+H_1,\\
        &f_{32}=Q_1,
    \end{aligned}
\end{align*}
where $P_1,Q_1$ and $ H_1$ are smooth functions of $(z_0,z_1,y_0,y_1)$.

Similar to the proof of Theorem (\ref{thm:L-f21}), we can demonstrate that $a=b$. Then, from the coefficients of $z_2$ and $y_2$ in equations (\ref{eq:L1-f31})-(\ref{eq:L3-f31}), we obtain
\begin{equation*}
\begin{array}{l}
P_1=-\tfrac{1}{2}f_{11,z_0z_0}az_1^2-f_{11,z_0y_0}az_1y_1-\tfrac{1}{2}f_{11,y_0y_0}ay_1^2+f_{21,z_0}a\eta z_1+f_{21,y_0}a\eta y_1+P_0,  \\
H_1=-\tfrac{1}{2}f_{21,z_0z_0}az_1^2-f_{21,z_0y_0}az_1y_1-\tfrac{1}{2}f_{21,y_0y_0}ay_1^2-f_{11,z_0}a\eta z_1-f_{11,y_0}a\eta y_1+H_0,\\
Q_1=a\delta (f_{11}f_{21,z_0}-f_{21}f_{11,z_0})z_1+a\delta (f_{11}f_{21,y_0}-f_{21}f_{11,y_0})y_1+Q_0,\\
\end{array}
\end{equation*}
where $P_0$, $Q_0$ and $H_0$ are functions of $(z_0,y_0)$. Moreover, we introduce $p$ and $h$ as functions of $(z_0,y_0)$, such that,
$$P_0=aa_1z_0p+ab_1y_0h,\qquad H_0=ab_2y_0h+aa_2z_0p.$$

From the third and second-degree terms in (\ref{eq:L1-f31})-(\ref{eq:L3-f31}), we have that all second and third-order partial derivatives of $f_{11}$ and $f_{31}$ are zero. Therefore, $f_{11}$ and $f_{31}$ are first-order polynomials
$$
f_{11}=q_1=a_1z_0+b_1y_0,\quad f_{31}=q_2=a_2z_0+b_2y_0,
$$
where ${a}_i$ and ${b}_i$, where $\gamma:=a_1b_2-a_2b_1\neq 0$. Here we assumed, up to a transformation $(z_0,y_0)\rightarrow (z_0+c_1, y_0+c_2)$, that the independent coefficient of $q_1$ and $q_2$ are zero.

With this notation the equation (\ref{eq:L3-f31}), reduces to
\begin{equation}\nonumber
(Q_{0,z_0}+ \delta a\eta ( q_1 a_1+ q_2 a_2))z_1+(Q_{0,y_0}+ \delta a \eta ( q_1 b_1+ q_2 b_2)a\eta)y_1+a\delta\gamma z_0y_0 (p-h)=0,
\end{equation}
them, $h=p$ and
$$
Q_0=-a\eta \delta q,\qquad \quad q=\tfrac{1}{2}(q_2^2+q_1^2)+c,
$$
where $c$ is an integration constant.

Therefore, in this case, the associates functions $f_{ij}$ are given by
\begin{align*}
&f_{11}=q_1,\qquad f_{21}=q_2,\qquad f_{31}=\eta,\\
&f_{12}=aa_1z_2+ab_1y_2+\eta a (a_2 z_1+b_2 y_1)+q_1p,\\
&f_{22}=aa_2z_2+ab_2y_2-\eta a(a_1 z_1+b_1y_1)+q_2p,\\
&f_{32}=\delta\gamma a(z_0y_1-y_0z_1) -\delta \eta a q.
\end{align*}
Moreover, the coefficients of $z_2$ and $y_2$ in (\ref{eq:demothL-f31Delta12}) are $-a\gamma y_0$ and $a\gamma z_0$, respectively, which do not vanish. This is sufficient for condition (\ref{eq:demothL-f31Delta12}) to hold.
Finally, equations (\ref{eq:L1-f31}) and (\ref{eq:L2-f31}) reduce to first-degree polynomials in $z_1$ and $y_1$. Each of these equations requires setting its coefficients to zero, namely
\begin{align}
&-a_1L_{11}-b_1L_{21}+aa_1\eta^2-a\delta a_1q_2^2+a\delta a_2 q_1q_2+aa_1p+aq_1p_{z_0}=0,\nonumber \\
&-a_1L_{12}-b_1L_{22}+a\eta^2 b_1-a\delta b_1q_2^2+a\delta b_2q_1q_2+ab_1p+aq_1p_{y_0}=0,\nonumber \\
&-a_1L_{13}-b_1L_{23}-a\eta (p+\delta q)q_2=0,\nonumber \\
&-a_2L_{11}-b_2L_{21}+aa_2\eta^2-a\delta a_2q_1^2+a\delta a_1q_1q_2+aa_2p+aq_2p_{z_0}=0,\nonumber\\
&-a_2L_{12}-b_2L_{22}+a\eta^2 b_2-a\delta b_2q_1^2+a\delta b_1q_2q_1+ab_2p+aq_2p_{y_0}=0,\nonumber\\
&-a_2 L_{13}-b_2L_{23}+a\eta q_1(p+\delta q).\nonumber
\end{align}
These equations can be solved for $L_{ij}$, $1\leq i\leq 2$, $1\leq j\leq 3$. By using the identities,
\begin{equation*}
\begin{array}{ll}
a_1q_1+ a_2q_2=q_{z_0},\qquad& b_2q_2+ b_1q_1=q_{y_0},\\
b_2q_1-b_1q_2=\gamma z_0,\qquad &a_2q_1-a_1q_2=-\gamma y_0,
\end{array}
\end{equation*}
we obtain,
\begin{align*}
&L_{11}=-a\delta y_0q_{y_0}+a\eta^2+z_0ap_{z_0}+ap,\qquad L_{12}=a\delta z_0q_{y_0}+z_0ap_{y_0},\qquad L_{13}=-\tfrac{\eta}{a\gamma}(p+\delta q)q_{y_0},\\
&L_{21}=a\delta y_0q_{z_0}+y_0ap_{z_0},\qquad L_{22}=-a\delta z_0q_{z_0}+a\eta^2+y_0ap_{y_0}+ap,\qquad L_{23}=\tfrac{a\eta}{\gamma}(p+\delta q)q_{z_0}.
\end{align*}
Consequently, we obtain (\ref{eq:coupledKdV-f31eta}). The converse follows from a straightforward computation.
\end{proof}
\section{B\"{a}cklund Transformation \label{sec:BT} for the coupled KdV system}
In this section we prove a B\"{a}cklund Transformation for the coupled KdV system (Example \ref{ex:coupledKdV}),
\begin{equation}\label{eq:KdVBT}
     \left\{\begin{array}{l}
       u_t=  -u_{xxx}+6 uvu_x, \\
       v_t=  -v_{xxx}+6uvv_x.
     \end{array}
     \right.
\end{equation}
This transformation allows one to construct a four-parameter family of new solutions by solving a first-order system of equations. The B\"{a}cklund Transformation is given in Theorem \ref{th:BT}, with an equivalent version provided in Theorem \ref{th:BTalt}. Additionally, we illustrate this transformation with examples.

\begin{Theorem}\label{th:BT}
Let $(u,v)$ be a real solution of the coupled KdV system. Then, for any constants $\lambda_1,\lambda_2$, such that $\lambda_1\neq 0$, the following system is completely integrable,
\begin{equation}\label{eq:BT}
    \begin{array}{rl}
         (u'-u)_x=&\sigma (u'+u)((u'-u)(v'-v)-\lambda_1^2)^{\frac{1}{2}}+\lambda_2(u'-u), \\
          % (v'-v)_x=&\sigma\dfrac{((u'-u)(v'-v)-\lambda_1^2)^{\frac{3}{2}}}{u'-u}+\sigma\dfrac{(2v(u'-u)+\lambda_1^2)((u'-u)(v'-v)-\lambda_1^2)^{\frac{1}{2}}}{u'-u}-\lambda_2(v'-v),\\
          (v'-v)_x=&\sigma(v'+v)((u'-u)(v'-v)-\lambda_1^2)^{\frac{1}{2}} -\lambda_2(v'-v),\\
          (u'-u)_t=&-2\sigma ((u'-u)(v'-v)-\lambda_1^2)^{\frac{1}{2}}\left[u_{xx}+\lambda_2u_x+\lambda_2^2(u'-u)-(u'+u)\left(uv+\frac{\lambda_1^2-\lambda_2^2}{2}\right)\right]+\\
          &-\left[(u'-u)(v'-3v)-2\lambda_1^2\right]u_x-(u'-u)(u'+u)v_x+\\
          &+2\lambda_2(u'-u)\left(u'v'+\frac{\lambda_1^2-\lambda_2^2}{2}\right)-2\lambda_2(u'+u)\left((u'-u)(v'-v)-\lambda_1^2\right),\\
          (v'-v)_t=&2\sigma ((u'-u)(v'-v)-\lambda_1^2)^{\frac{1}{2}}\left[-v_{xx}+\lambda_2v_x-\lambda_2^2(v'-v)+(v'+v)\left(uv+\frac{\lambda1^2-\lambda_2^2}{2}\right)\right]+\\
          &-\left[(v'-v)(u'-3u)-2\lambda_1^2\right]v_x-(v'-v)(v'+v)u_x+\\
          &-2\lambda_2(v'-v)\left(u'v'+\frac{\lambda_1^2-\lambda_2^2}{2}\right)+2\lambda_2(v'+v)\left((u'-u)(v'-v)-\lambda_1^2\right),
    \end{array}
\end{equation}
where $\sigma=\pm 1$. Moreover, $(u',v')$ is a solution of the coupled KdV system. 
\end{Theorem}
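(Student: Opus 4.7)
The plan is to exploit the fact that the coupled KdV system (\ref{eq:KdVBT}) admits the $\mathfrak{sl}(2,\R)$-valued linear representation (\ref{eq:ProblemaLinearlocal}) with $f_{ij}$ given by (\ref{eq:coupledKdVfijEx}), so that a B\"acklund transformation arises naturally from a Darboux-type gauge transformation of that linear problem. I would work with the pseudopotential $\Gamma=\Psi_2/\Psi_1$, whose Riccati system (\ref{eq:Gamma1}) has compatibility $\Gamma_{xt}=\Gamma_{tx}$ equivalent to $(u,v)$ solving the coupled KdV. The target is to show that the system (\ref{eq:BT}) is obtained by a suitable algebraic elimination of $\Gamma$, that its integrability is automatic, and that $(u',v')$ inherits a zero-curvature representation of the same structural form.

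The first key step is to identify the correct parametric substitution relating $\Gamma$ to $(u',v')$. The combination $(u'-u)(v'-v)-\lambda_1^2$ appearing under a square root in (\ref{eq:BT}) is the telltale signature of a Riccati-type quadratic, so I would look for an ansatz of the form $u'-u = \alpha\Gamma$ and $v'-v = \beta\Gamma^{-1}$ with $\alpha\beta=\lambda_1^2$, under which the radical becomes proportional to $\Gamma-\lambda_1^2\Gamma^{-1}$ (up to sign). The two choices $\sigma=\pm 1$ correspond to the $\Gamma\leftrightarrow \lambda_1^2/\Gamma$ symmetry of the Riccati quadratic, and the auxiliary parameter $\lambda_2$ should enter as a constant shift of the spectral parameter $\eta$ in the linear problem, consistent with the asymmetric terms $\pm\lambda_2(u'-u)$ and $\mp\lambda_2(v'-v)$ visible in the $x$-equations of (\ref{eq:BT}).

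With this parametrization in hand, the derivation of (\ref{eq:BT}) is a direct computation: substitute $\Gamma_x$ and $\Gamma_t$ from (\ref{eq:Gamma1}), with $f_{ij}$ as in (\ref{eq:coupledKdVfijEx}), into $(u'-u)_x=\alpha\Gamma_x$, $(v'-v)_x=-\beta\Gamma^{-2}\Gamma_x$, and similarly for the $t$-derivatives. The $x$-equations should fall out essentially by inspection. The $t$-equations are the heavy part: substituting $\Gamma_t$ generates the $u_{xx}$, $v_{xx}$, $u_x$, $v_x$ contributions and the bilinear products of $u,v,u',v'$ seen in (\ref{eq:BT}), and one must repeatedly invoke the radical identity $(u'-u)(v'-v)-\lambda_1^2=(\text{radical})^2$ to convert $\Gamma^{\pm 2}$ factors back into polynomial-plus-radical expressions matching the stated right-hand sides.

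Complete integrability of (\ref{eq:BT}) then follows automatically, because it reduces to $\Gamma_{xt}=\Gamma_{tx}$, which holds precisely because $(u,v)$ is a solution of the coupled KdV. To verify that $(u',v')$ itself solves the coupled KdV, I would show that the resulting transformation produces a new 1-form $\Omega'$ of the same structural form (\ref{eq:coupledKdVfijEx}) but with $u',v'$ in place of $u,v$; the zero-curvature equation $d\Omega'-\Omega'\wedge\Omega'=0$ for $\Omega'$, by Theorem \ref{Lemma}, then gives the coupled KdV for the primed pair. The main obstacle I anticipate is the bookkeeping in the $t$-equations: they require long, mechanical but error-prone algebra, and particular care is needed to track the branch $\sigma=\pm 1$ consistently across all four equations while systematically eliminating $\Gamma^{\pm 2}$ via the radical identity.
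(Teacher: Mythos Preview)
Your overall strategy---use the Riccati pseudopotential attached to the coupled KdV linear problem and read off $(u',v')$ from it---is correct in spirit and is what the paper does. But the concrete ansatz you propose is fatally degenerate: if $u'-u=\alpha\Gamma$ and $v'-v=\beta\Gamma^{-1}$ with $\alpha\beta=\lambda_1^2$, then $(u'-u)(v'-v)=\alpha\beta=\lambda_1^2$ \emph{identically}, so the radical $\bigl((u'-u)(v'-v)-\lambda_1^2\bigr)^{1/2}$ is zero and the whole system (\ref{eq:BT}) collapses. A single real pseudopotential with a real spectral parameter cannot carry the two independent constants $\lambda_1,\lambda_2$; treating $\lambda_2$ as a real shift of $\eta$ leaves $\lambda_1$ only as an artificial normalisation that, as you see, trivialises the radical.

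The paper's device is to \emph{complexify}: take $\eta=\lambda_2+i\lambda_1$ and split $\Gamma=\phi+i\psi$, obtaining a real first-order system in the \emph{two} functions $(\phi,\psi)$. One solves the $x$-part of that system for $(u,v)$ in terms of $(\phi,\psi,\phi_x,\psi_x)$, substitutes into the $t$-part to get a third-order evolution for $(\phi,\psi)$, and observes that this evolution is invariant under $\lambda_1\mapsto-\lambda_1$. Applying that discrete symmetry defines $(u',v')$, and one finds $u'-u=\lambda_1/\psi$, $v'-v=\lambda_1(\phi^2+\psi^2)/\psi$, whence $(u'-u)(v'-v)-\lambda_1^2=(\lambda_1\phi/\psi)^2$ is generically nonzero and $\sigma=\pm1$ is the sign choice in inverting for $\phi$. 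Differentiating these relations and eliminating $(\phi,\psi)$ via the real Riccati-type system yields (\ref{eq:BT}); complete integrability and the fact that $(u',v')$ again solves the coupled KdV are immediate from the construction, with no separate gauge argument needed.
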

\begin{proof}
We know from Example \ref{ex:coupledKdV} that the coupled KdV system describes \textbf{pss} with associated functions given by \eqref{eq:coupledKdVfijEx}. Therefore, the associated Riccati system \eqref{eq:Gamma1},
\begin{equation*}
    \begin{array}{l}
         \Gamma_x=-u\Gamma^2-\eta\Gamma+v,  \\
         \Gamma_t=(u_{xx}+\eta u_x+\eta^2u-2vu^2)\Gamma^2+\left[2(uv_x-vu_x-\eta uv)+\eta^3\right]\Gamma-v_{xx}+\eta v_x-\eta^2v+2uv^2, 
    \end{array}
\end{equation*}
where $\eta\in\mathbb{C}$, is integrable for $\Gamma(x,t)\in\mathbb{C}$. Considering $\Gamma=\phi +i \psi$ and $\eta=\lambda_2+i\lambda_1$, the real and imaginary parts of the Riccati system read as follows, 
\begin{equation}\label{eq:psiphi}
\begin{split}
\phi_x=&u(\psi^2-\phi^2)+\lambda_1\psi-\lambda_2\phi+v,\\
\psi_x=&-2u\phi\psi-\lambda_1\phi-\lambda_2\psi\\
\phi_t=& \left[u_{xx}+\lambda_2 u_x-2vu^2+(\lambda_2^2-\lambda_1^2)u\right](\phi^2-\psi^2)-2\lambda_1(2\lambda_2 u+u_x)\phi\psi+\\
& +\left[\lambda_2(-2uv-3\lambda_1^2+\lambda_2^2)+2(uv_x-vu_x) \right]\phi+\lambda_1(2uv+\lambda_1^2-3\lambda_2^2)\psi+\\
& -v_{xx}+\lambda_2v_x+2uv^2+(\lambda_1^2-\lambda_2^2)v,\\
\psi_t=&\lambda_1(2\lambda_2u+u_x)(\phi^2-\psi^2)+2\left[u_{xx}+\lambda_2 u_x-2vu^2+(\lambda_2^2-\lambda_1^2)u\right]\phi\psi+\\
 &-\lambda_1(2uv+\lambda_1^2-3\lambda_2^2)\phi+\left[-\lambda_2(2uv+3\lambda_1^2-\lambda_2^2)+2(uv_x-vu_x)\right]\psi+\\
 &+ \lambda_1(v_x-2v\lambda_2).
\end{split}
\end{equation}
From the first two equations of \eqref{eq:psiphi}, we obtain,
\begin{equation}\label{eq:uvphipsix}
\begin{array}{l}
u=-\dfrac{\psi_x+\lambda_1\phi+\lambda_2\psi}{2\psi\phi},\\
v=\phi_x-\dfrac{(\phi^2-\psi^2)\psi_x+(\phi^2+\psi^2)(\lambda_1\phi-\lambda_2\psi)}{2\psi\phi}.
\end{array}
\end{equation}
By replacing $u,v$ into the last two equations of \eqref{eq:psiphi}, we have
\begin{equation}\label{eq:Hphipsit}
\begin{split}
    \phi_t=-\phi_{xxx}+\frac{3}{2}\phi_xA-\frac{3}{2}\dfrac{(\psi_x^2+\phi_x^2)(2\lambda_2\phi-\phi_x)}{\phi^2}+3\dfrac{\phi\phi_x-\psi\psi_x}{\psi\phi}\psi_{xx},\\
     \psi_t=-\psi_{xxx}+\frac{3\psi_x}{2}A-\frac{3}{2}\dfrac{(\psi_x^2+\phi_x^2)(2\lambda_2\psi+\psi_x)}{\phi^2}+3\dfrac{\phi\psi_x+\psi\phi_x}{\psi\phi}\psi_{xx},
\end{split}
\end{equation}
where,
\begin{equation*}
  A=\dfrac{(\psi^2+\phi^2)(\lambda_1^2\phi^2-\lambda_2^2\psi^2)-(\phi\psi_x+\psi\phi_x)^2}{\phi^2\psi^2}  .
\end{equation*}
Hence, the system \eqref{eq:psiphi} is equivalent to \eqref{eq:uvphipsix}-\eqref{eq:Hphipsit}. It follows that, given a solution $(u,v)$ of the coupled KdV system, then for each pair of constants $\lambda_1$ and $\lambda_2$, the system \eqref{eq:psiphi} is completely integrable, and $(\phi,\psi)$ is a solution of \eqref{eq:Hphipsit}. Conversely, if $(\phi,\psi)$ is a solution of \eqref{eq:psiphi}, then $(u,v)$ defined by \eqref{eq:uvphipsix} satisfies the coupled KdV system.

We notice that equation \eqref{eq:Hphipsit} is invariant under the transformation $$(\phi,\psi,\lambda_1,\lambda_2)\mapsto (\phi',\psi',\lambda_1',\lambda_2')=(\phi,\psi,-\lambda_1,\lambda_2).$$ As a consequence, if $(u,v)$ is a solution of the coupled KdV and $(\phi,\psi,\lambda_1,\lambda_2)$ satisfies \eqref{eq:psiphi}, then the functions $(u',v')$, given by
\begin{equation}\label{eq:uvb}
\begin{array}{l}
u'=-\dfrac{\psi_x-\lambda_1\phi+\lambda_2\psi}{2\psi\phi},\\
v'=\phi_x-\dfrac{(\phi^2-\psi^2)\psi_x-(\phi^2+\psi^2)(\lambda_1\phi+\lambda_2\psi)}{2\psi\phi},
\end{array}
\end{equation}
is also a solution of the coupled KdV system. From \eqref{eq:uvb} and \eqref{eq:uvphipsix} we obtain
\begin{equation}\label{eq:ubuvbv}
    u'-u=\frac{\lambda_1}{\psi},\qquad v'-v=\lambda_1\frac{\phi^2+\psi^2}{\psi}.
\end{equation}
From \eqref{eq:uvb}, we have that
\begin{equation}\label{eq:psiphiuv}
         \psi=\dfrac{\lambda_1}{u'-u},  \qquad
         \phi=\sigma\dfrac{((u'-u)(v'-v)-\lambda_1^2)^{\frac{1}{2}}}{u'-u},
\end{equation}
where $\sigma=\pm 1$.

Finally, differentiating \eqref{eq:ubuvbv} with respect to $x$ and $t$ and using \eqref{eq:psiphi} and \eqref{eq:psiphiuv}, we have \eqref{eq:BT} which concludes the proof.
\end{proof}
\begin{Example}
Let $u=v\equiv 0$ the trivial solution of the coupled KdV system, then
\begin{equation}\label{eq:BTuv0}
    \begin{array}{l}
         u'=\sigma\lambda_1 e^{\lambda_2x-(\lambda_2^3-3\lambda_2\lambda_1^2)t-k_2}\sec(\lambda_1x+(\lambda_1^3-3\lambda_1\lambda_2^2)t+k_1),  \\
         v'=\sigma\lambda_1e^{-\lambda_2x+(\lambda_2^3-3\lambda_2\lambda_1^2)t+k_2}\sec(\lambda_1x+(\lambda_1^3-3\lambda_1\lambda_2^2)t+k_1).
    \end{array}
\end{equation}
where $\sigma=\pm 1$ and $\lambda_i,k_i$ are real constants, is a four-parameter family of solution of the coupled KdV system related with $(u,v)$ by the B\"{a}cklund transformation \eqref{eq:BT}.

Indeed, using Theorem \ref{th:BT}, the system \eqref{eq:BT}, reduces to
\begin{equation}\label{eq:BTtrivial}
    \begin{array}{l}
         u'_x=u'\sigma\sqrt{u'v'-\lambda_1^2}+\lambda_2 u', \\
         v'_x=v'\sigma\sqrt{u'v'-\lambda_1^2}-\lambda_2 v', \\
         u'_t=u'(\lambda_1^2-3\lambda_2^2)\sigma\sqrt{u'v'-\lambda_1^2}-u'(\lambda_2^3-3\lambda_2\lambda_1^2),\\
         v'_t=v'(\lambda_1^2-3\lambda_2^2)\s\sqrt{u'v'-\lambda_1^2}+v'(\lambda_2^3-3\lambda_2\lambda_1^2).
    \end{array}
\end{equation}
We combine the first two equations and the last two equations of the system to obtain the following equations
\begin{equation*}
        \left(\frac{v'}{u'}\right)_x=-2 \left(\frac{v'}{u'}\right)\lambda_2,  \qquad
        \left(\frac{v'}{u'}\right)_t=2 \left(\frac{v'}{u'}\right)(\lambda_2^3-3\lambda_2\lambda_1^2),
\end{equation*}
By solving these two equations for $v'/u'$, we conclude
\begin{equation}\label{eq:u1v1r}
    v'=u'e^{2\left(-\lambda_2 x+(\lambda_2^3-3\lambda_2\lambda_1^2)t+k_2\right)},
\end{equation}
where $k_2$ is a constant. From the last equation, the system \ref{eq:BTtrivial} reduces to,
\begin{equation}\label{eq:sysu1}
    \begin{array}{l}
        u'_x=u'\sigma\sqrt{\left(u'e^{\left(-\lambda_2 x+(\lambda_2^3-3\lambda_2\lambda_1^2)t+k_2\right)}\right)^2-\lambda_1^2} +u'\lambda_2, \\
        u'_t=u'(\lambda_1^2-3\lambda_2^2)\sigma\sqrt{\left(u'e^{\left(-\lambda_2 x+(\lambda_2^3-3\lambda_2\lambda_1^2)t+k_2\right)}\right)^2-\lambda_1^2}-u'(\lambda_2^3-3\lambda_2\lambda_1^2),
    \end{array}
\end{equation}
We introduce a new variable $U(x,t)$, defined by \begin{equation}\label{eq:Udef}
U(x,t)^2=\left(u'(x,t)e^{\left(-\lambda_2 x+(\lambda_2^3-3\lambda_2\lambda_1^2)t+k_2\right)}\right)^2-\lambda_1^2.
\end{equation}
Hence, system \eqref{eq:sysu1} reads
\begin{equation*}
    \begin{array}{l}
         U_x=\sigma(U^2+\lambda_1^2),\\
         U_t=\sigma(\lambda_1^2-3\lambda_2^2)(U^2+\lambda_1^2), 
    \end{array}
\end{equation*}
which implies that
\begin{equation}\label{eq:U}
    U=\sigma\lambda_1\tan\left(\lambda_1 x+(\lambda_1^3-3\lambda_1\lambda_2^2)t+k_1\right),
\end{equation}
where $k_1$ is a constant. Therefore, replacing \eqref{eq:U} into \eqref{eq:Udef} and considering \eqref{eq:u1v1r}, we have Equation \eqref{eq:BTuv0}.
\end{Example}
To compute the B\"{a}cklund transformation for a solution of the coupled KdV system, one must solve the first-order system \eqref{eq:BT}. However, the proof of Theorem \ref{th:BT} provides an alternative approach to obtaining such a B\"{a}cklund transformation that might be simpler in certain situations, by solving the first-order system \eqref{eq:psiphi} for the functions $\phi$ and $\psi$, as indicated in the next theorem.
\begin{Theorem}\label{th:BTalt}
    Let $(u,v)$ be a real solution of the coupled KdV system. Then for any real constants $\lambda_1$ and $\lambda_2$, the system \eqref{eq:psiphi} is completely integrable for $\phi,\psi$. Furthermore, $(u',v')$ defined by
\begin{equation}\label{eq:BTalt}
    u'=u+\frac{\lambda_1}{\psi},\qquad v'=v+\lambda_1\frac{\phi^2+\psi^2}{\psi},
\end{equation}
where $\phi,\psi$ satisfy \eqref{eq:psiphi}, is also a solution of the coupled KdV system.
\end{Theorem}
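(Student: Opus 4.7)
The plan is to reuse the machinery already developed in the proof of Theorem \ref{th:BT}, simply reorganized so that one works directly with the Riccati variables $(\phi,\psi)$ instead of with the differences $(u'-u,v'-v)$. The key observation is that once one has shown \eqref{eq:psiphi} is equivalent to \eqref{eq:uvphipsix}--\eqref{eq:Hphipsit} and that $(u',v')$ defined by \eqref{eq:uvb} satisfies the coupled KdV system, the claim \eqref{eq:BTalt} is merely the subtraction \eqref{eq:ubuvbv} that already appeared in the proof of Theorem \ref{th:BT}.

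Concretely, the first step is to invoke Example \ref{ex:coupledKdV} to recall that the coupled KdV system describes \textbf{pss} with associated functions \eqref{eq:coupledKdVfijEx} depending on a parameter $\eta$. Since these $f_{ij}$ are polynomial in $\eta$, the structure equations \eqref{eq:SELocal} continue to hold for complex values of $\eta$ whenever $(u,v)$ is a real solution of the coupled KdV system. Consequently, the associated Riccati system \eqref{eq:Gamma1} is completely integrable for any $\eta\in\mathbb{C}$ and any complex unknown $\Gamma(x,t)$. Setting $\eta=\lambda_2+i\lambda_1$ with $\lambda_1,\lambda_2\in\mathbb{R}$ and $\Gamma=\phi+i\psi$ with real $\phi,\psi$, and then separating real and imaginary parts of the complex Riccati system yields precisely the real system \eqref{eq:psiphi}. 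Hence \eqref{eq:psiphi} is completely integrable, which proves the first assertion.

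For the second assertion, the plan is to apply the same symmetry exploited in the proof of Theorem \ref{th:BT}. The evolution equations \eqref{eq:Hphipsit} depend on $\lambda_1$ only through $\lambda_1^2$ (visible in the coefficient $A$), so they are invariant under $\lambda_1\mapsto -\lambda_1$. Since \eqref{eq:psiphi} is equivalent to the pair \eqref{eq:uvphipsix}--\eqref{eq:Hphipsit}, the same functions $(\phi,\psi)$ satisfy \eqref{eq:psiphi} with the parameter replaced by $-\lambda_1$ provided that $(u,v)$ is simultaneously replaced by the pair $(u',v')$ defined by \eqref{eq:uvb}. By the converse direction established in the proof of Theorem \ref{th:BT}, this new pair $(u',v')$ is itself a solution of the coupled KdV system. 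Finally, subtracting \eqref{eq:uvphipsix} from \eqref{eq:uvb} collapses the $\psi_x$ and $\lambda_2$ contributions and leaves precisely the relations in \eqref{eq:ubuvbv}, namely $u'-u=\lambda_1/\psi$ and $v'-v=\lambda_1(\phi^2+\psi^2)/\psi$, which is \eqref{eq:BTalt}.

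The main obstacle is essentially nil, since the difficult analytic work — passage to complex $\eta$, splitting into real/imaginary parts, and verification of the $\lambda_1\mapsto-\lambda_1$ invariance of \eqref{eq:Hphipsit} — was already carried out in the proof of Theorem \ref{th:BT}. The only care required is bookkeeping: one must explicitly note that the equivalence between solutions of \eqref{eq:psiphi} and solutions of \eqref{eq:Hphipsit} is bidirectional (so that the same $(\phi,\psi)$ can be legitimately re-interpreted with parameter $-\lambda_1$), and one must verify that the algebraic identity \eqref{eq:ubuvbv} is exactly the formula \eqref{eq:BTalt}. Beyond this, no calculation is needed; in particular there is no need to re-derive the $t$-derivative equations of \eqref{eq:BT}, which is what makes this alternative formulation more convenient in practice.
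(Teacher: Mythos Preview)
Your proposal is correct and follows essentially the same approach as the paper: the paper does not give a separate proof of Theorem \ref{th:BTalt} but simply remarks that it is extracted from the proof of Theorem \ref{th:BT}, via the complexification $\eta=\lambda_2+i\lambda_1$, the equivalence \eqref{eq:uvphipsix}--\eqref{eq:Hphipsit}, the $\lambda_1\mapsto-\lambda_1$ invariance of \eqref{eq:Hphipsit}, and the identity \eqref{eq:ubuvbv}. Your write-up makes this extraction explicit and is faithful to the paper's reasoning.
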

\begin{Example}\label{ex:BT2}
We consider $\lambda_1$ and $\lambda_2$ as constants, and $u \equiv 0$ and $v \equiv 1$ as a solution of the coupled KdV system. Then, 
    \begin{equation}\label{eq:BTEx2}
             u'=-\dfrac{\lambda_1(\lambda_1^2+\lambda_2^2)}{\lambda_1+e^{\xi_2}\sin\xi_1},\qquad 
v'=-\dfrac{e^{\xi_2}}{\lambda_1^2+\lambda_2^2}\dfrac{\lambda_1e^{\xi_2}+(\lambda_1^2-\lambda_2^2)\sin\xi_1+2\lambda_1\lambda_2\cos\xi_1}{\lambda_1+e^{\xi_2}\sin\xi_1},
    \end{equation}\
where, $\xi_1=\lambda_1x+(\lambda_1^3-3\lambda_1\lambda_2^2)t+k_1$, $\xi_2=-\lambda_2 x+(\lambda_2^3-3\lambda_2\lambda_1^2)t+k_2$ with $k_1$ and $k_2$ being constants, is a four-parameter family of solutions to the coupled KdV system, related by a B\"{a}cklund transformation to the solution $(u,v)$.
Indeed, in this case, the equation \eqref{eq:psiphi} reduces to
\begin{equation*}
\begin{split}
\phi_x=&\lambda_1\psi-\lambda_2\phi+1,\\
\psi_x=&-\lambda_1\phi-\lambda_2\psi,\\
\phi_t=& (\lambda_2^3-3\lambda_2\lambda_1^2)\phi+(\lambda_1^3-3\lambda_1\lambda_2^2)\psi +\lambda_1^2-\lambda_2^2,\\
\psi_t=&-(\lambda_1^3-3\lambda_1\lambda_2^2)\phi+(\lambda_2^3-3\lambda_2\lambda_1^2)\psi-2\lambda_1\lambda_2,
\end{split}
\end{equation*}
from which we obtain,
\begin{equation*}
    \phi=\dfrac{e^{\xi_2}\cos\xi_1+\lambda_2}{\lambda_1^2+\lambda_2^2},\qquad \psi=-\dfrac{e^{\xi_2}\sin(\xi_1)+\lambda_1}{\lambda_1+\lambda_2},
\end{equation*}
where, $\xi_1=\lambda_1x+(\lambda_1^3-3\lambda_1\lambda_2^2)t+k_1$, $\xi_2=-\lambda_2 x+(\lambda_2^3-3\lambda_2\lambda_1^2)t+k_2$ with $k_1$ and $k_2$ being constants of integration. Therefore, from Theorem \ref{th:BTalt} we obtain Equation \eqref{eq:BTEx2}.
\end{Example}
% The coupled KdV system \eqref{eq:KdVBT} reduces to KdV equation $u_t+u_{xxx}-6uu_x=0$ when, $v\equiv 1$, and to the mKdV $u_t+u_{xxx}+6u^2u_x$ when $v=-u$. Example \ref{ex:BT2} demonstrates that for $\lambda_1\neq 0$, $v\equiv 1$ does not lead to $v'\equiv 0$, indicating that the B\"{a}cklund transformation derived here does not preserve solutions for the KdV equation. However, when $\lambda_2$, the B\"{a}cklund transformation preservers solutions of the mKdV case, thus providing a B\"{a}cklund transformation for the mKdV equation, as shown in the following theorem.
% \begin{Theorem}
%     Let u be a solution of the mKdV equation,
%     \begin{equation}\label{eq:mKdV}
%         u_t+u_{xxx}-6u^2u_x=0.
%     \end{equation}
% Then, for any constant $\lambda_1\neq 0$, the following system is completely integrable,
% \begin{equation}\label{eq:BTKdV}
%     \begin{array}{rl}
%          (u'-u)_x=&\sigma(u'+u)((u'-u)^2-\lambda_1^2)^{\frac{1}{2}},\\
%           (u'-u)_t=&\sigma\left[-2u_{xx}+(u'+u)(2u^2+\lambda_1^2)\right] ((u'-u)^2 -\lambda_1^2)^{\frac{1}{2}}+\\
%         &-2(u'-u-\lambda_1)(u'-u+\lambda_1)u_x
%     \end{array}
% \end{equation}
% Moreover, $u'$ is a solution of the mKdV equation \eqref{eq:mKdV}. 
% \end{Theorem}
% \begin{proof}
%     It follows from the proof of Theorem \ref{th:BT}. Suppose that $v=u$, then, the functions $\phi$ and $\psi$ given by Equation \eqref{eq:psiphiuv}, satisfies
%     \begin{equation*}
%         \phi^2+\psi^2=1.
%     \end{equation*}
% \end{proof}
\section*{Acknowledgment}

The author acknowledges the \textit{Institute of Mathematics of the University of Granada} (IMAG) Grants Program, \textit{Support for Visits of Young Talented Researchers} (2023 call), for its support. Special thanks are extended to Miguel S\'anchez and Joaqu\'in P\'erez, who both served as Directors of IMAG during the stay, and to Antonio Mart\'inez L\'opez for hosting the visit.

\end{document}